\newtheorem{theorem}{Theorem}[section]
\newtheorem{lemma}{Lemma}[section]
\newtheorem{cor}[theorem]{Corollary}
\newtheorem{definition}{Definition}[section]
\newtheorem{remark}{Remark}
\begin{document}
\title
{\LARGE \textbf{ Extremal  matching energy of complements of trees }}

\author{{\small Tingzeng Wu$^{a,}$\thanks{
Corresponding author.\newline
 \emph{ E-mail address}: mathtzwu@163.com (T. Wu), weigenyan@263.net (W. Yan), zhanghp@lzu.edu.cn (H. Zhang).\newline
 $^{1}$Partially supported by NSFC (11371180).\newline
 $^{2}$Partially supported by NSFC (11171134).} $^{, 1}$, Weigen Yan$^{b,2}$, Heping Zhang$^{c,1}$} \\
{\small $^{a}$School of Mathematics and Statistics, Qinghai Nationalities University,}\\
{\small  Xining, Qinghai 810007, P. R.~China}\\
{\small $^{b}$School of Sciences, Jimei University, Xiamen 361021, P. R.~China}\\
{\small $^{c}$School of Mathematics and Statistics, Lanzhou University,}\\
{\small Lanzhou, Gansu 730000, P. R.~China}}

\date{}

\maketitle
\noindent {\bf Abstract:}\ \ The matching energy is defined as the sum of the absolute values of
the zeros of the matching polynomial of a graph, which is proposed first by Gutman and Wagner [The matching energy of a graph, Discrete Appl. Math. 160 (2012) 2177--2187]. And they gave some properties and  asymptotic results of the
matching energy. In this paper,  we characterize the trees with $n$ vertices whose complements have the maximal, second-maximal and minimal matching energy. Further,  we   determine the trees with a perfect matching whose complements have the  second-maximal matching energy. In particular, show that the trees with  edge-independence number number $p$ whose complements have the minimum matching energy for $p=1,2,\ldots, \lfloor\frac{n}{2}\rfloor$.

\smallskip
\noindent\textbf{AMS classification}: 05C05; 05C35; 92E10\\
\noindent {\bf Keywords:} Matching polynomial; Matching energy; Hosoya index; Energy

\section{Introduction}
All graphs only considered in this paper are undirected simple graphs. For notation and terminologies not defined here, see \cite{god}. Let $G=(V(G),E(G))$ be a graph with the vertex set $V(G)=\{v_{1},v_{2},...,v_{n}\}$ and the edge set $E(G)=\{e_{1},e_{2},...,e_{m}\}$. Denote by $G-v$ or $G-e$ the graphs obtained from $G$ by removing $v$
or $e$, respectively, where $v\in V(G)$ and $e\in E(G)$. Denote by $\overline{G}$ the complement of $G$. The path, star and complete graph with $n$ vertices are denoted by $P_{n}$, $K_{1,n-1}$ and $K_{n}$, respectively. Let $T_{n,2}$ be a tree obtained from the star $K_{1,3}$ by attaching a path $P_{n-3}$ to one of the pendent vertices of $K_{1,3}$, and  let $T_{n,2}^{1}$
 be a tree obtained from the star $K_{1,3}$ by attaching two paths $P_{2}$ and $P_{n-4}$ to two of the different pendent vertices of $K_{1,3}$ respectively.
Let $T_{n}^{p}$ be a tree with $n$ vertices obtained from the star $K_{1,n-p}$ by attaching a pendent edge
to each of $p-1$ pendent vertices in $K_{1,n-p}$ for $p = 1,2,...,\lfloor \frac{n}{2}\rfloor$.

A $k$-matching in $G$ is a set
of $k$ pairwise non-incident edges. The number of $k$-matchings in $G$ is denoted by $m(G, k)$. Specifically, $m(G,0)=1$, $m(G,1)=m$ and $m(G,k)=0$ for $k>\frac{n}{2}$ or $k<0$. For a $k$-matching $M$ in $G$, if $G$ has no $k'$-matching  such that $k'>k$,  then $M$ is called  a {\em maximum matching} of $G$. The number $\nu(G)$ of edges in a maximum matching $M$ is called {\em the edge-independence number} of $G$.
Let $\mathcal{T}_{n,p}$ denote the set of trees with $n$ vertices and the edge-independence number at least $p$ for $p=1,2,...,\lfloor\frac{n}{2}\rfloor$.
The {\em Hosoya index} $Z(G)$ is defined as the total number of matchings of $G$, that is
 $$Z(G)=\sum_{k=0}^{\lfloor\frac{n}{2}\rfloor}m(G,k).$$
\indent  Recall that for a graph $G$ on $n$ vertices, the {\em matching polynomial} $\mu(G,x)$ of $G$ (sometimes denoted by $\mu(G)$ with no confusion) is given by
 $$\mu(G,x)=\sum_{k\geq 0}(-1)^{k}m(G,k)x^{n-2k}.\eqno(1)$$
Its theory is well elaborated \cite{cve1,far,god,god1,gut}. Gutman and Wagner \cite{gut1}
gave the definition  of  the {\em quasi-order} $\succeq$ as follows. If $G$ and $H$ have the matching polynomials in the form (1), then the quasi-order $\succeq$ is defined by
$$G\succeq H\Longleftrightarrow m(G,k)\geq m(H,k)\quad \text{for all} \ \ k=0,1,...,\lfloor n/2\rfloor. \eqno(2)$$
Particularly, if $G\succeq H$ and there exists some $k$ such that $m(G,k)> m(H,k)$,
then we write $G\succ H$.\\
\indent Gutman  and Wagner in \cite{gut1} first proposed the  concept of the {\em matching energy} of a graph, denoted by $ME(G)$, as $$ME=ME(G)=\frac{2}{\pi}\int_{0}^{\infty}x^{-2}{\rm ln} \Bigg[\sum_{k\geq0}m(G,k)x^{2k}\Bigg]dx.\eqno(3)$$
Meanwhile, they gave also an other  form of definition of   matching energy of a graph. That is, $$ME(G)=\sum_{i=1}^{n}|\mu_{i}|,$$
where $\mu_{i}$ denotes the root  of  matching polynomial of $G$.
Additionally, they found some relations between the matching energy and  energy (or reference energy). By (2) and (3), we easily obtain the  fact as follows.
$$G\succeq H\Longrightarrow ME(G)\geq ME(H)\quad \text{and}\quad G\succ H\Longrightarrow ME(G)> ME(H). \eqno(4)$$
This property is an important technique to determine extremal graphs with the matching energy.\\
\indent Note that the energy (or reference energy) of graphs are extensively examined (see \cite{aih,cve1,dew,gut2,gut3,lxl}). However, the literatures on
the matching energy are far less than that on the energy and reference energy. Up to now, we find only few papers about the matching energy  published. Gutman and Wagner \cite{gut1} gave some properties and asymptotic results of the matching energy. Li and Yan \cite{li} characterized the connected graph with the fixed connectivity (resp. chromatic number) which has the maximum matching energy. Ji et al. in \cite{ji} determined completely the graphs with the minimal and maximal matching energies in bicyclic graphs. Li et al. \cite{lih} characterized  the unicyclic graphs with fixed girth (resp.  clique number) which has the maximum
and minimum matching energy.\\
\indent In this paper, inspired by the idea \cite{yan}, we investigate the problem of the matching energy of the complement of trees and obtain the following  main theorems.
\begin{theorem}\label{art01}
Let  $T$ be a tree with $n$ vertices. If $T\ncong T_{n,2}$ and $T\ncong P_{n}$, then $$ME(\overline{T})<ME(\overline{T_{n,2}})<ME(\overline{P_{n}}).$$
\end{theorem}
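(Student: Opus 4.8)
The strategy is to promote the three–way energy inequality to the stronger quasi-order chain
$$\overline{T}\ \prec\ \overline{T_{n,2}}\ \prec\ \overline{P_{n}},$$
which gives Theorem~\ref{art01} at once via (4). The computational backbone is the inclusion–exclusion identity obtained by classifying a matching of $K_{n}$ according to the sub-matching it induces in $G$: writing $b(m,\ell)$ for the number of $\ell$-matchings of $K_{m}$, one has for every $n$-vertex graph $G$
$$m(\overline{G},k)=\sum_{j=0}^{k}(-1)^{j}\,m(G,j)\,b(n-2j,k-j),\qquad\text{equivalently}\qquad \mu(\overline{G},x)=\sum_{j\ge0}m(G,j)\,\mu(K_{n-2j},x).$$
Since $m(T,0)=1$ and $m(T,1)=n-1$ for \emph{every} $n$-vertex tree, the $j=0$ and $j=1$ terms are tree-independent; hence comparing the complements of two $n$-vertex trees reduces to comparing the tails $\sum_{j\ge2}(-1)^{j}m(T,j)\,b(n-2j,k-j)$, and all of the estimates below concern this tail.

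First I would settle $\overline{T_{n,2}}\prec\overline{P_{n}}$ by an explicit computation. From $m(P_{n},j)=\binom{n-j}{j}$ and (by the edge-deletion recursion on the pendant leaf of $T_{n,2}$) $m(T_{n,2},j)=\binom{n-1-j}{j}+\binom{n-2-j}{j-1}$, two applications of Pascal's rule give the clean formula $m(P_{n},j)-m(T_{n,2},j)=\binom{n-2-j}{j-2}=m(P_{n-4},j-2)$. Substituting this into the tail and recognising the outcome as the inclusion–exclusion identity applied in the variable $k-2$ to the path $P_{n-4}$ on $n-4$ vertices, one gets
$$m(\overline{P_{n}},k)-m(\overline{T_{n,2}},k)=m\bigl(\overline{P_{n-4}},\,k-2\bigr)\ \ge\ 0,$$
which is strictly positive for some $k$, so $\overline{T_{n,2}}\prec\overline{P_{n}}$.

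The core is to prove $\overline{T}\prec\overline{T_{n,2}}$ for every $n$-vertex tree $T$ with $T\ncong P_{n}$ and $T\ncong T_{n,2}$. This is subtler than it looks, since complementation does \emph{not} respect the quasi-order on trees: for example $T_{n,2}\prec T_{n,2}^{1}$ yet $\overline{T_{n,2}^{1}}\prec\overline{T_{n,2}}$. I would argue in two layers. For $k=2$ the tail is just $m(\overline{T},2)=\text{const}+m(T,2)$ with $m(T,2)=\binom{n-1}{2}-\sum_{v}\binom{d(v)}{2}$; since any tree $\ncong P_{n}$ has a vertex of degree $\ge3$, one checks $\sum_{v}\binom{d(v)}{2}\ge n-1=\sum_{v}\binom{d_{T_{n,2}}(v)}{2}$, with equality exactly when $T$ has a single vertex of degree $3$ and all others of degree $\le2$ (the ``one-branch'' spiders). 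Hence $m(\overline{T},2)\le m(\overline{T_{n,2}},2)$, with equality only for such $T$. For the remaining, one-branch trees I would use a path-merging move: for a graph $H$, a vertex $v$, and $k\ge l\ge1$, let $H_{v}(k,l)$ be $H$ with two pendant paths on $k$ and $l$ vertices attached at $v$, and prove $\overline{H_{v}(k+1,l-1)}\succ\overline{H_{v}(k,l)}$. Then every one-branch tree other than $T_{n,2}$ can be carried to $T_{n,2}$ by such moves (balancing the two longer legs of the spider out to length $1$), each move strictly increasing every $m(\overline{\cdot},k)$, while $T_{n,2}$ itself (case $H_{v}(1,1)\to H_{v}(2,0)$) moves to $P_{n}$. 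Together with the previous paragraph this gives $\overline{T}\preceq\overline{T_{n,2}}\prec\overline{P_{n}}$, strict unless $T=T_{n,2}$, and the theorem follows from (4).

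The main obstacle is precisely the sign alternation in $m(\overline{G},k)=\sum_{j}(-1)^{j}m(G,j)\,b(n-2j,k-j)$: monotonicity of the complement matching numbers is \emph{not} inherited from that of the tree matching numbers, so the path-merging lemma must be verified at the level of the full alternating sum. Concretely I would expand $m(H_{v}(k,l),j)$ by the standard path-matching recursions, so that $m(H_{v}(k,l),j)-m(H_{v}(k+1,l-1),j)$ becomes an explicit combination of the $m(H,\cdot)$ and $m(H-v,\cdot)$ with a definite sign pattern, feed this into the identity above, and dominate the negative (odd-$j$) contributions using the exact ratio $b(n-2j-2,k-j-1)=\dfrac{2(k-j)}{(n-2j)(n-2j-1)}\,b(n-2j,k-j)$ together with the log-concavity of $(m(T,j))_{j}$ (valid since $\mu(T,x)$ has only real zeros). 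Small values of $n$, where the tail has very few terms, would be handled directly. A last delicate point is to ensure the chain of moves passes through $T_{n,2}$ rather than some other penultimate spider $S(p,q,1)$---and this is exactly the place where $T_{n,2}$, not the quasi-order-larger tree $T_{n,2}^{1}$, surfaces as the runner-up among the complements.
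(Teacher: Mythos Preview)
Your overall strategy (promote the energy inequality to a quasi-order chain and then invoke (4)) is exactly the paper's, and your computation of $m(\overline{P_n},k)-m(\overline{T_{n,2}},k)=m(\overline{P_{n-4}},k-2)$ is correct and is precisely the special case of the paper's Theorem~3.2. The problem is in the ``two-layer'' argument for $\overline{T}\prec\overline{T_{n,2}}$.

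Your Layer~1 only treats the single value $k=2$: it shows $m(\overline{T},2)\le m(\overline{T_{n,2}},2)$, strict unless $T$ is a three-legged spider. But the quasi-order requires $m(\overline{T},k)\le m(\overline{T_{n,2}},k)$ for \emph{every} $k$, and you provide no argument for $k\ge 3$ when $T$ is not a spider. Layer~2 (path-merging) is then invoked only for one-branch trees. So for a tree with, say, two vertices of degree $3$, you have established nothing beyond $k=2$, and the chain $\overline{T}\preceq\overline{T_{n,2}}$ is simply not proved. The $k=2$ computation is a pleasant check of strictness, but it cannot replace a coefficient-by-coefficient comparison.

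The repair is to apply your path-merging move to \emph{arbitrary} trees, not just spiders: in any $T\ne P_n$ there is always a vertex of degree $\ge 3$ with at least two pendant path-branches, so repeated merges carry $T$ down to a three-legged spider and then to $T_{n,2}$. This is exactly the paper's route: transformation~\textbf{1} (Theorem~3.2) reduces any tree to $P_n$ through a chain whose penultimate member is a three-legged spider $T_3$, and transformations~\textbf{2} and~\textbf{3} (Theorems~3.4, 3.6) carry that spider to $T_{n,2}$. Moreover, your proposed verification of the path-merging lemma via log-concavity and the ratio $b(n-2j-2,k-j-1)/b(n-2j,k-j)$ is both harder and vaguer than necessary. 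The paper's device is cleaner: compute $\mu(T_1)-\mu(T_2)$ directly (via Lemma~2.2) and observe that it equals $\pm\mu(F)$ for an explicit forest $F$ on $|V(T_1)|-2c$ vertices; then Lov\'asz's identity (your inclusion--exclusion formula) immediately gives $m(\overline{T_1},r)-m(\overline{T_2},r)=\pm m(\overline{F},r-c)\ge 0$ with no sign-domination argument needed. Your own $\overline{P_n}$ vs.\ $\overline{T_{n,2}}$ computation already exhibits this mechanism; carrying it through for the general move would close the gap.
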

\begin{theorem}\label{art02}
Let  $\mathcal{T}_{n,p}$ denote the set of trees with $n$ vertices and the edge-independence number at least $p$ for $p=1,2,...,\lfloor\frac{n}{2}\rfloor$. For a tree $T\in\mathcal{T}_{n,p}$, then $$ ME(\overline{T})\geq ME(\overline{T_{n}^{p}})$$
with equality if and only if  $T\cong T_{n}^{p}$.
\end{theorem}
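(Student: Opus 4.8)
The plan is to prove the coefficientwise domination $m(\overline T,k)\ge m(\overline{T_n^p},k)$ for every $k$, with strict inequality for some $k$ unless $T\cong T_n^p$; the theorem then follows from the integral representation (3) and the implication (4). The key preliminary is the expansion of the matchings of a complement in terms of matchings of the graph and of complete graphs: writing $\Phi_m(x):=\sum_{l\ge 0}m(K_m,l)x^{2l}$ and $\phi(\overline G,x):=\sum_{k\ge 0}m(\overline G,k)x^{2k}$, one has, for every graph $G$ on $n$ vertices,
$$\mu(\overline G,x)=\sum_{j\ge 0}m(G,j)\,\mu(K_{n-2j},x),\qquad m(\overline G,k)=\sum_{j=0}^{k}(-1)^{j}m(K_{n-2j},k-j)\,m(G,j),$$
equivalently $\phi(\overline G,x)=\sum_{j\ge 0}(-1)^{j}m(G,j)\,x^{2j}\Phi_{n-2j}(x)$. (This follows from $m(\overline G,k)=\sum_{|S|=2k}\mathrm{pm}(\overline{G[S]})$ together with the inclusion–exclusion value of $\mathrm{pm}(\overline H)$ for $H$ on $2k$ vertices, after a resummation; cf.\ \cite{yan}.) I would also record the two tools that carry the rest of the argument: the recurrence $\Phi_m(x)=\Phi_{m-1}(x)+(m-1)x^{2}\Phi_{m-2}(x)$ (from $m(K_m,l)=m(K_{m-1},l)+(m-1)m(K_{m-2},l-1)$, giving also $m(K_m,l)=\tfrac{m(m-1)}{2l}m(K_{m-2},l-1)$), and the edge identity $m(H+e,k)=m(H,k)+m(H-u-v,k-1)$ for $e=uv\notin E(H)$.

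Next I would realise the passage from $T$ to $T_n^p$ by a sequence of elementary moves. Given $T\in\mathcal T_{n,p}$ with $\nu(T)=q\ge p$, the strategy is two stages: first star-ify $T$ into the spider $T_n^{q}$ by repeatedly relocating the outer endpoint of a well-chosen edge to a fixed centre $c$ (keeping $\nu=q$), and then descend from $T_n^{q}$ to $T_n^{p}$ by iterating the single move $T_n^{r}\to T_n^{r-1}=T_n^{r}-v_1u_1+cu_1$ that detaches a pendant-$P_2$ leaf $u_1$ and reattaches it at $c$. Every such move has the form $T'=T-ab+cb$, so $\overline{T'}=\overline T+ab-cb$, and applying the edge identity twice (the term with $b$ removed cancels) gives
$$m(\overline{T'},k)-m(\overline T,k)=m\big(\overline{T-a-b},\,k-1\big)-m\big(\overline{T-c-b},\,k-1\big),$$
using $\overline T-S=\overline{T-S}$. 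Hence it suffices, for each move, to show $m(\overline{T-a-b},k-1)\le m(\overline{T-c-b},k-1)$.

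There are two mechanisms for the last inequality. When $a$ is a leaf of $T$ (which covers the entire second stage, where $T-v_1-u_1\cong T_{n-2}^{r-1}$ and $T-c-u_1\cong (r-2)K_2\cup(n-2r+2)K_1$), one has $E(T-c-b)\subseteq E(T-a-b)$, hence $\overline{T-a-b}\subseteq\overline{T-c-b}$ as graphs, and the inequality is immediate from subgraph monotonicity of $m(\cdot,k)$. In the remaining cases I would feed both sides into the complement formula and control the resulting alternating sum $\sum_{j\ge 1}(-1)^{j}m(K_{n-2-2j},k-1-j)\Delta_j$ (with $\Delta_j\ge 0$ an explicit difference of matching numbers of small forests, e.g.\ $\Delta_j=(r-2)\binom{r-3}{j-1}+(n-2r+1)\binom{r-2}{j-1}$ in the second stage) by pairing consecutive terms $(1,2),(3,4),\dots$ and invoking $m(K_m,l)=\tfrac{m(m-1)}{2l}m(K_{m-2},l-1)$ together with the recurrence for $\Phi_m$; each pair then reduces to an inequality of the shape $\Delta_{2i+2}\le(n-5-4i)\Delta_{2i+1}$ among explicit binomial quantities, which is routine.

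The genuinely delicate point — the part I expect to be the main obstacle — is designing the first‑stage moves: exhibiting a finite repertoire of edge‑relocations (shortening a long pendant path, merging two pendant paths at a common vertex, sliding a pendant subtree toward $c$) that provably brings an arbitrary tree with $\nu=q$ to $T_n^{q}$ without dropping the matching number below $q$, and then verifying the associated alternating inequalities for each move type via the pairing device above. Everything else — the reductions through (3) and (4), the complement expansion, the per‑move bookkeeping, and the binomial verifications — is mechanical. Since each move changes $m(\overline{\cdot},k)$ by a nonpositive amount that vanishes only when the move is trivial, chaining the moves yields $m(\overline T,k)\ge m(\overline{T_n^{q}},k)\ge m(\overline{T_n^{p}},k)$ for all $k$, with some inequality strict unless $T$ is already $T_n^{p}$; feeding this into (4) gives $ME(\overline T)\ge ME(\overline{T_n^{p}})$ with equality iff $T\cong T_n^{p}$.
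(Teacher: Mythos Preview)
Your proposal is a reasonable outline, and the edge–relocation bookkeeping
\[
m(\overline{T'},k)-m(\overline T,k)=m\big(\overline{T-a-b},k-1\big)-m\big(\overline{T-c-b},k-1\big)
\]
is correct and pleasant. However, the proof is not complete, and the incomplete part is exactly the hard part.

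\textbf{The first stage is a genuine gap.} You acknowledge that ``designing the first-stage moves'' (carrying an arbitrary tree with $\nu=q$ to the spider $T_n^{q}$ while keeping $\nu\ge q$, and verifying the accompanying inequalities) is the main obstacle, and you do not carry it out. This is precisely where the content of the argument lies. It is not merely a matter of writing down a repertoire of moves: for a generic relocation $T'=T-ab+cb$, the two forests $T-a-b$ and $T-c-b$ live on \emph{different} vertex sets, so your subgraph-monotonicity shortcut does not apply in general (it already fails to apply literally in your own second-stage example, where $a=v_1$ has degree $2$, not $1$; it happens to work there only because $a$'s sole other neighbour is $c$, so that a vertex bijection realises the containment). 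For moves where no such containment exists, you fall back on a pairing argument for the alternating sum $\sum_j(-1)^j m(K_{n-2-2j},k-1-j)\Delta_j$. That scheme requires, for every move type you introduce, an inequality like $\Delta_{2i+2}\le (n-5-4i)\Delta_{2i+1}$ to hold for all $i$ simultaneously; you assert this is ``routine'' but do not check it even once, and there is no a priori reason it holds for the yet-unspecified first-stage moves. Without this, nothing is proved about general trees.

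\textbf{How the paper avoids the obstacle.} The paper does not estimate alternating sums at all. It introduces three carefully chosen transformations (types {\bf 4}, {\bf 5}, {\bf 6}) and, for each, computes the \emph{difference of matching polynomials} as a product of matching polynomials of smaller forests, e.g.
\[
\mu(T_9)-\mu(T_{10})=(r-1)\sum_{v\neq v'}\mu(P_1)^{r-1}\,\mu(T_0-u-v).
\]
Because this difference is itself (a sum of) matching polynomials of graphs on $n-4$ vertices, applying Lov\'asz's identity (Lemma~\ref{art21}) converts it directly into
\[
m(\overline{T_9},k)-m(\overline{T_{10}},k)=(r-1)\sum_{v\neq v'}m\big(\overline{(r-1)P_1\cup(T_0-u-v)},\,k-2\big)\ge 0,
\]
which is manifestly nonnegative with no pairing or estimation needed; strictness at $k=2$ is immediate. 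The reduction of an arbitrary $T$ to this framework is handled by the (cited) structural Lemmas~\ref{art41}--\ref{art42}, which guarantee that repeated type-{\bf 4} moves bring $T$ to a tree with exactly $n-p$ leaves, after which types~{\bf 5} and~{\bf 6} finish the job. This is the missing idea in your plan: choose transformations so that the matching-polynomial difference \emph{factors}, and then the Lov\'asz inversion turns the complement comparison into a single nonnegative matching count rather than an alternating sum to be bounded.
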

 By Theorems \ref{art01} and \ref{art02}, we obtain directly the following corollary.
\begin{cor}\label{art03}
 The components of $P_{n}$ and $K_{1,n-1}$ have the maximum and minimum matching energy in all components of trees, respectively.
\end{cor}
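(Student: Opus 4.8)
The plan is to read off the corollary directly from Theorems \ref{art01} and \ref{art02}; the only substantive points are identifying the ranges of the two theorems and recognizing the extremal tree $T_n^1$. So this will be a short packaging argument rather than a new computation.

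First I would dispose of the maximum. Theorem \ref{art01} already states that for every $n$-vertex tree $T$ with $T \ncong P_n$ one has $ME(\overline{T}) \le ME(\overline{T_{n,2}}) < ME(\overline{P_n})$ (the middle inequality being an equality exactly when $T \cong T_{n,2}$). Hence $\overline{P_n}$ attains the maximum matching energy among complements of $n$-vertex trees, and it is the unique maximizer. Nothing further is needed here.

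For the minimum I would specialize Theorem \ref{art02} to $p=1$. The one observation required is that every tree on $n \ge 2$ vertices has at least one edge, hence edge-independence number at least $1$, so $\mathcal{T}_{n,1}$ is precisely the set of all $n$-vertex trees; and that $T_n^1$, obtained from $K_{1,n-1}$ by attaching a pendent edge to each of $p-1 = 0$ of its leaves, is the star $K_{1,n-1}$ itself. Theorem \ref{art02} with $p=1$ then reads $ME(\overline{T}) \ge ME(\overline{K_{1,n-1}})$ for every $n$-vertex tree $T$, with equality if and only if $T \cong K_{1,n-1}$. Combining, for every $n$-vertex tree $T$ we get the chain $ME(\overline{K_{1,n-1}}) \le ME(\overline{T}) \le ME(\overline{T_{n,2}}) < ME(\overline{P_n})$, which is exactly the assertion of the corollary (with both extremizers unique).

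There is essentially no obstacle beyond this bookkeeping; the corollary is a repackaging of the two theorems. The only subtlety worth spelling out in the write-up is that Theorems \ref{art01} and \ref{art02} are stated over families that together cover all $n$-vertex trees, and the trivial but necessary identification $T_n^1 \cong K_{1,n-1}$.
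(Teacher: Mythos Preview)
Your proposal is correct and is exactly the paper's intended argument: the corollary is stated immediately after Theorems~\ref{art01} and~\ref{art02} with the remark that it follows directly, and your write-up simply makes explicit the two specializations (Theorem~\ref{art01} for the maximum, Theorem~\ref{art02} with $p=1$ and the identification $T_n^1\cong K_{1,n-1}$ for the minimum). One cosmetic point: your final displayed chain $ME(\overline{K_{1,n-1}}) \le ME(\overline{T}) \le ME(\overline{T_{n,2}}) < ME(\overline{P_n})$ fails as written when $T=P_n$, so in the final version state the two extremal claims separately rather than as a single chain.
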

\begin{theorem}\label{art04}
Let $\mathscr{T}_{n,\frac{n}{2}}$  be a proper subset of $\mathcal{T}_{n,p}$ containing  all trees with a perfect matching.  Suppose that $T\in \mathscr{T}_{n,\frac{n}{2}}$, $T\neq T_{n}^{\frac{n}{2}}$ and $P_{n}$.  If $n\geq 6$, then
$$ME(\overline{T_{n}^\frac{n}{2}})<ME(\overline{T})\leq ME(\overline{T_{n,2}^{1}})<ME(\overline{P_{n}}),$$
where the equality holds if and only if $T\cong T_{n,2}^{1}$.
\end{theorem}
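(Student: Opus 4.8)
The plan is to pass from matching energies to the quasi-order $\succeq$ by means of the implication (4), and then to reduce the whole statement to the single new inequality $ME(\overline{T})\le ME(\overline{T_{n,2}^{1}})$. The basic tool is the sieve identity
$$m(\overline{T},k)=\sum_{j=0}^{k}(-1)^{j}\,m(T,j)\,b(n-2j,\,k-j),\qquad b(m,i):=\frac{m!}{2^{i}\,i!\,(m-2i)!},$$
where $b(m,i)$ is the number of $i$-matchings of $K_{m}$; it follows by counting the $k$-matchings of $K_{n}$ that contain no edge of $T$ and sieving over the ``forbidden'' sub-matchings of $T$. Equivalently, after the signs are collected, $\mu(\overline{T},x)=\sum_{j\ge 0}m(T,j)\,\mu(K_{n-2j},x)$, which is the kind of identity exploited in \cite{yan}.

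Two of the three inequalities come for free. A tree with a perfect matching has edge-independence number $\frac{n}{2}$, so $\mathscr{T}_{n,\frac{n}{2}}\subseteq\mathcal{T}_{n,\frac{n}{2}}$; since $T_{n}^{\frac{n}{2}}$ itself has a perfect matching, Theorem \ref{art02} with $p=\frac{n}{2}$ gives $ME(\overline{T_{n}^{\frac{n}{2}}})<ME(\overline{T})$ for every $T\in\mathscr{T}_{n,\frac{n}{2}}$ with $T\ncong T_{n}^{\frac{n}{2}}$. And since $T_{n,2}^{1}\ncong P_{n}$ (and $T_{n,2}^{1}$ does have a perfect matching, hence genuinely lies in $\mathscr{T}_{n,\frac{n}{2}}$), Theorem \ref{art01} gives $ME(\overline{T_{n,2}^{1}})<ME(\overline{P_{n}})$. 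So the theorem reduces to: for every $T\in\mathscr{T}_{n,\frac{n}{2}}$ with $T\ncong P_{n},T_{n,2}^{1}$ one has $ME(\overline{T})<ME(\overline{T_{n,2}^{1}})$, and it would suffice to show $m(\overline{T},k)\le m(\overline{T_{n,2}^{1}},k)$ for all $k$, with strict inequality for some $k$.

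To prove this I would ``straighten'' $T$ toward $T_{n,2}^{1}$. A tree with a perfect matching has a unique perfect matching, and along a longest path one shows that the neighbour of a deepest leaf has all of its remaining branches equal to pendant edges; this lets one set up a short list of elementary transformations $T\to T'$ --- replacing such a branching configuration near the end of a longest path by a longer pendant path, keeping a perfect matching --- whose iteration carries any $T\in\mathscr{T}_{n,\frac{n}{2}}\setminus\{P_{n},T_{n,2}^{1}\}$ to $T_{n,2}^{1}$. For a single step one computes $m(T',j)-m(T,j)$ from the edge recursion $m(G,k)=m(G-e,k)+m(G-u-v,k-1)$, which reduces matters to matching numbers of a few short paths (hence to binomial/Fibonacci-type identities), then substitutes into the sieve identity to obtain
$$m(\overline{T'},k)-m(\overline{T},k)=\sum_{j\ge 2}(-1)^{j}\bigl(m(T',j)-m(T,j)\bigr)\,b(n-2j,\,k-j),$$
and one checks this is $\ge 0$ for all $k$ with strict inequality somewhere, which simultaneously pins down the equality case $T\cong T_{n,2}^{1}$.

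The main obstacle is exactly the verification of that last display: because of the alternating signs, the inequality $m(T',j)\ge m(T,j)$ for all $j$ (i.e.\ $T'\succeq T$ as trees) does \emph{not} by itself force $\overline{T'}\succeq\overline{T}$, so the sign of the alternating sum must be controlled for each transformation individually. I expect this to hinge on the log-concavity of the sequences $\bigl(m(T,j)\bigr)_{j}$ --- a consequence of the real-rootedness of matching polynomials (Heilmann--Lieb) --- together with the three-term recursion $b(m,i)=b(m-1,i)+(m-1)\,b(m-2,i-1)$ for the $K_{m}$-matching numbers, which allow one to regroup the sum into nonnegative blocks. Should a uniform transformation argument prove too delicate, an alternative is to isolate the finitely many near-path caterpillars with a perfect matching (the only serious competitors of $T_{n,2}^{1}$), compare their complements' matching sequences directly through the identity, and then show every remaining tree is strictly dominated by one of them; if even the discrete quasi-order comparison fails, one falls back on the Coulson-type integral (3), where it is enough to establish $\sum_{k}m(\overline{T},k)x^{2k}\le\sum_{k}m(\overline{T_{n,2}^{1}},k)x^{2k}$ for all $x>0$.
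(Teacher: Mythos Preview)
Your reduction is sound: the leftmost and rightmost inequalities follow from Theorems~\ref{art01} and~\ref{art02} exactly as you say, and the whole problem does reduce to showing $\overline{T_{n,2}^{1}}\succ\overline{T}$ for $T\in\mathscr{T}_{n,n/2}\setminus\{P_n,T_{n,2}^{1}\}$. That part of your plan matches the paper's proof verbatim.

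Where you diverge is in how to secure that remaining inequality. You propose to invent new perfect-matching-preserving transformations and then wrestle with the alternating sieve sum. The paper does something much cheaper: it simply reuses the transformations of types~{\bf 1},~{\bf 2},~{\bf 3} already set up in Section~3 (Theorems~\ref{art32},~\ref{art34},~\ref{art36}). Iterating type~{\bf 1} carries $T$ through a chain to $P_n$; two steps short of $P_n$ one is either at $T_{n,2}^{1}$ already or at a spider of the form $T_3$, and types~{\bf 2} and~{\bf 3} then push that spider to $T_{n,2}^{1}$. No new transformation needs to be designed.

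More to the point, the ``main obstacle'' you flag---controlling the alternating sum---dissolves for these transformations, and it is worth seeing why, because this is the idea your plan is missing. In each of Theorems~\ref{art32}--\ref{art36} the crucial computation is that the \emph{difference} of matching polynomials factors as (a sum of) matching polynomials of explicit smaller forests: e.g.\ $\mu(T_1)-\mu(T_2)=-\sum_{v\sim u}\mu(T_v^{*})$ with each $T_v^{*}$ a forest on $n-4$ vertices. Feeding this into the Lov\'asz sieve (your display, or Lemma~\ref{art21}) does not leave an alternating sum at all; it collapses termwise to
\[
m(\overline{T_1},r)-m(\overline{T_2},r)=-\sum_{v\sim u}m\bigl(\overline{T_v^{*}},\,r-2\bigr),
\]
a single nonnegative complement matching number per summand. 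The sign control is automatic once $\mu(T)-\mu(T')$ has been factored; Heilmann--Lieb log-concavity, the three-term recursion for $b(m,i)$, and the Coulson integral are all unnecessary (and, as you suspect, log-concavity of the individual sequences $\bigl(m(T,j)\bigr)_j$ does not by itself control alternating sums of their \emph{differences}). Your plan becomes a proof as soon as you replace the proposed sign analysis by this factoring step---or, equivalently, just invoke Theorems~\ref{art32},~\ref{art34},~\ref{art36}.
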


\section{Some Lemmas}
There exists a well-known formula which characterizes the relation between $m(G,r)$ and $m(\overline{G},i)$ (see Lov\'{a}sz \cite{lov}), which will play a key role in the proofs of the main theorems.

\begin{lemma}\label{art21}{\rm(\cite{lov}) }
Let $G$ be a simple graph with $n$ vertices and $\overline{G}$ the complement of $G$. Then
$$m(G,r)=\sum_{i=0}^{r}(-1)^{i}{n-2i\choose 2r-2i}(2r-2i-1)!!m(\overline{G},i),\eqno(5)$$
where $s!!=s\times(s-2)!!$, and $(-1)!!=0!!=1$.
\end{lemma}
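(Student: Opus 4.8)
The plan is to prove the identity (5) by a double-counting argument driven by inclusion--exclusion, using no property of $G$ beyond the fact that $E(G)$ and $E(\overline{G})$ partition the edges of the complete graph $K_{n}$ on $V(G)$. The starting observation is that an $r$-matching of $G$ is precisely an $r$-matching of $K_{n}$ that uses no edge of $\overline{G}$. So if I write $\mathcal{M}_{r}$ for the set of all $r$-matchings of $K_{n}$ and, for $M\in\mathcal{M}_{r}$, let $b(M)$ be the number of edges of $M$ lying in $\overline{G}$, then $m(G,r)=\sum_{M\in\mathcal{M}_{r}}[\,b(M)=0\,]$, where $[\cdot]$ is the Iverson indicator. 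The one enumerative fact I need about complete graphs is that the number of $r$-matchings of $K_{m}$ is $\binom{m}{2r}(2r-1)!!$: one selects the $2r$ matched vertices in $\binom{m}{2r}$ ways and then pairs them into $r$ edges in $(2r-1)!!$ ways, the latter being the number of perfect matchings of $K_{2r}$. The conventions $(-1)!!=0!!=1$ make this correct in the boundary cases.

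Next I would detect the condition $b(M)=0$ through the elementary identity $\sum_{i=0}^{b}(-1)^{i}\binom{b}{i}=[\,b=0\,]$, which is just the expansion of $(1-1)^{b}$. Substituting this and interchanging the order of summation gives
$$m(G,r)=\sum_{i\ge 0}(-1)^{i}\sum_{M\in\mathcal{M}_{r}}\binom{b(M)}{i},$$
and the inner sum now counts pairs $(M,S)$ in which $M$ is an $r$-matching of $K_{n}$ and $S\subseteq M$ is a set of $i$ of its edges, each belonging to $\overline{G}$.

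The heart of the argument is to re-count these pairs by choosing $S$ first. A valid $S$ is exactly an $i$-matching of $\overline{G}$, of which there are $m(\overline{G},i)$; having fixed such an $S$, the matching $M$ is completed by adjoining $r-i$ further pairwise-disjoint edges on the remaining $n-2i$ vertices, that is, an $(r-i)$-matching of $K_{n-2i}$, and by the building block above there are $\binom{n-2i}{2r-2i}(2r-2i-1)!!$ of these. Multiplying these counts and summing over $i$ (the range truncating at $i=r$, since $S$ cannot exceed $M$) yields precisely the right-hand side of (5), completing the proof.

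I expect the only delicate points to be bookkeeping rather than conceptual. I would check that the passage to pairs $(M,S)$ is a genuine bijection, so that each matching $M$ is accounted for with the correct multiplicity $\binom{b(M)}{i}$, and I would verify the factorial and binomial conventions in the extreme terms, namely $i=r$ (where $\binom{n-2r}{0}(-1)!!=1$ correctly records the single empty extension) and $r=0$. Since the only structural input is that $G$ and $\overline{G}$ complement each other inside $K_{n}$, the identity holds for every simple graph, as claimed.
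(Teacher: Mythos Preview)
Your argument is correct. The inclusion--exclusion via the identity $\sum_{i}(-1)^{i}\binom{b(M)}{i}=[\,b(M)=0\,]$, followed by the bijective recount of pairs $(M,S)$ starting from the $i$-matching $S$ in $\overline{G}$ and extending by an $(r-i)$-matching of $K_{n-2i}$, is exactly the standard derivation; the count $m(K_{n-2i},r-i)=\binom{n-2i}{2r-2i}(2r-2i-1)!!$ and the boundary cases are handled properly.

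There is nothing to compare against here: the paper does not supply its own proof of this lemma but simply quotes it from Lov\'asz's \emph{Combinatorial Problems and Exercises}. Your write-up is in fact the canonical proof one finds for this identity, so it is entirely appropriate as a self-contained justification.
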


The following results about the matching polynomial of $G$ can be found in Godsil \cite{god}.
\begin{lemma}\label{art22}{\rm(\cite{god}) }
The matching polynomial satisfies the following identities:\\
$(i)$ $\mu(G\cup H, x)=\mu(G,x)\mu(H,x),$ \\
$(ii)$ $\mu(G, x)=\mu(G\setminus e,x)-\mu(G-u-v,x)$ if $e=\{u, v\}$ is an edge of $G$,\\
$(iii)$ $\mu(G, x)=x\mu(G\setminus u,x)-\sum_{v\thicksim u}\mu(G-u-v,x)$ if $u\in V(G)$.
\end{lemma}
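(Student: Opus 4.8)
The plan is to prove all three identities directly from the combinatorial definition (1) of the matching polynomial. Each identity is the polynomial shadow of a simple recurrence for the matching numbers $m(G,k)$, so for every part I would first establish the underlying counting relation and then feed it into (1), keeping explicit track of the vertex counts (which govern the exponents of $x$) and the signs $(-1)^{k}$. No external machinery is needed; everything reduces to bijections between sets of matchings plus bookkeeping.

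For $(i)$, I would note that a disjoint union imposes no interaction between the two parts, so every $k$-matching of $G\cup H$ decomposes uniquely as an $i$-matching of $G$ together with a $(k-i)$-matching of $H$; this gives $m(G\cup H,k)=\sum_{i=0}^{k}m(G,i)\,m(H,k-i)$. Writing $p=|V(G)|$ and $q=|V(H)|$, so that $G\cup H$ has $p+q$ vertices, the factorizations $(-1)^{k}=(-1)^{i}(-1)^{k-i}$ and $x^{(p+q)-2k}=x^{p-2i}\,x^{q-2(k-i)}$ show that the defining sum for $\mu(G\cup H,x)$ is exactly the Cauchy product of the sums defining $\mu(G,x)$ and $\mu(H,x)$, which yields the product identity.

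For $(ii)$, with $e=\{u,v\}$, I would partition the $k$-matchings of $G$ according to whether they contain $e$: those avoiding $e$ are precisely the $k$-matchings of $G\setminus e$, while those using $e$ are in bijection with the $(k-1)$-matchings of $G-u-v$. This gives $m(G,k)=m(G\setminus e,k)+m(G-u-v,k-1)$. Substituting into (1), and using that $G$ and $G\setminus e$ have $n$ vertices whereas $G-u-v$ has $n-2$, the first term reproduces $\mu(G\setminus e,x)$; in the second term the reindexing $j=k-1$ turns $(-1)^{k}x^{n-2k}$ into $-(-1)^{j}x^{(n-2)-2j}$, producing exactly $-\mu(G-u-v,x)$. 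Part $(iii)$ is the same argument with a different partition: I would classify the $k$-matchings of $G$ by whether they cover the chosen vertex $u$, so that uncovered ones are the $k$-matchings of $G\setminus u$ and covered ones match $u$ to a unique neighbour $v\sim u$ with a $(k-1)$-matching of $G-u-v$ on the rest, giving $m(G,k)=m(G\setminus u,k)+\sum_{v\sim u}m(G-u-v,k-1)$. Since $G\setminus u$ has $n-1$ vertices, the extraction $x^{n-2k}=x\cdot x^{(n-1)-2k}$ produces the factor $x\,\mu(G\setminus u,x)$, and each neighbour term contributes $-\mu(G-u-v,x)$ exactly as before.

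I do not expect a genuine obstacle here, since the identities are elementary. The only point demanding care is the sign and exponent bookkeeping under the reindexing $j=k-1$, where an off-by-one in the power of $x$ or a dropped sign would spoil the conclusion; for that reason I would record the number of vertices of each graph appearing ($n$, $n-1$, or $n-2$) at every step and verify that the parity of $(-1)^{k}$ tracks correctly with the shift in the matching index.
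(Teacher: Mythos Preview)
Your argument is correct: each of the three identities follows from the obvious bijective recurrence on $m(G,k)$ together with the sign/degree bookkeeping you describe, and the reindexing $j=k-1$ is handled properly. The paper itself does not prove this lemma at all---it is quoted from \cite{god} without proof---so there is no in-paper argument to compare against; what you wrote is exactly the standard elementary proof one finds in Godsil's book.
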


\begin{lemma}\label{art23}{\rm(\cite{god}) }
Let $m$ and $n$ be two positive integers. Then
$$\mu(P_{m+n})=\mu(P_{m})\mu(P_{n})-\mu(P_{m-1})\mu(P_{n-1}).\eqno(6)$$
\end{lemma}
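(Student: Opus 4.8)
The plan is to prove the identity directly from the edge-deletion recurrence in Lemma \ref{art22}$(ii)$, choosing the one edge whose removal splits the path $P_{m+n}$ into exactly the two desired sub-paths. Label the vertices of $P_{m+n}$ in their natural order as $v_1, v_2, \ldots, v_{m+n}$, and single out the edge $e=\{v_m, v_{m+1}\}$ joining the $m$-th and $(m+1)$-th vertices.

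First I would apply Lemma \ref{art22}$(ii)$ to this edge, which gives
$$\mu(P_{m+n}, x)=\mu(P_{m+n}\setminus e, x)-\mu(P_{m+n}-v_m-v_{m+1}, x).$$
The graph $P_{m+n}\setminus e$ is disconnected into the path on $\{v_1,\ldots,v_m\}$, a copy of $P_m$, and the path on $\{v_{m+1},\ldots,v_{m+n}\}$, a copy of $P_n$. Similarly, deleting both endpoints of $e$ leaves the path on $\{v_1,\ldots,v_{m-1}\}$, a copy of $P_{m-1}$, together with the path on $\{v_{m+2},\ldots,v_{m+n}\}$, a copy of $P_{n-1}$. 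Next I would invoke the multiplicativity of the matching polynomial over disjoint unions, Lemma \ref{art22}$(i)$, to obtain $\mu(P_{m+n}\setminus e, x)=\mu(P_m,x)\mu(P_n,x)$ and $\mu(P_{m+n}-v_m-v_{m+1}, x)=\mu(P_{m-1},x)\mu(P_{n-1},x)$. Substituting these two evaluations into the recurrence above yields precisely
$$\mu(P_{m+n})=\mu(P_m)\mu(P_n)-\mu(P_{m-1})\mu(P_{n-1}),$$
as claimed.

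There is no serious obstacle here; the computation is essentially an exercise in applying the two parts of Lemma \ref{art22} and reading off the resulting components. The only point requiring a little care is the boundary behaviour: when $m=1$ (or, symmetrically, $n=1$), the vertex set $\{v_1,\ldots,v_{m-1}\}$ is empty, so $P_{m-1}=P_0$ is the empty graph with the standard convention $\mu(P_0,x)=1$. In that degenerate case the identity collapses to the familiar one-step recurrence $\mu(P_{1+n})=x\mu(P_n)-\mu(P_{n-1})$, which is exactly Lemma \ref{art22}$(iii)$ applied at the pendant vertex $v_1$. Verifying this endpoint case confirms that the formula holds uniformly for all positive integers $m$ and $n$.
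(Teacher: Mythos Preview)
Your argument is correct and is the standard derivation of this identity. Note that the paper does not actually supply a proof of Lemma~\ref{art23}; it simply quotes the result from Godsil~\cite{god}. Your proof via Lemma~\ref{art22}(i)--(ii), deleting the edge $\{v_m,v_{m+1}\}$ and reading off the two resulting disjoint paths, is exactly the natural one-line justification, and your handling of the boundary case $m=1$ (or $n=1$) via the convention $\mu(P_0,x)=1$ is appropriate.
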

\begin{lemma}\label{art24}{\rm(\cite{yan}) }
If $T$ is a tree with $n$ vertices and
edge-independence number $\nu(T)=p$, then $T$ has at
most $n-p$ vertices of degree one. In particular, if T
has exactly $n-p$ vertices of degree one, then every
vertex of degree at least two in $T$ is adjacent to at
least one vertex of degree one.
\end{lemma}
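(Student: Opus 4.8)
The plan is to work directly with a fixed maximum matching and to exploit the fact that, in a tree with $n\ge 3$ vertices, connectivity forbids an edge whose two endpoints are both leaves. Write $L$ for the set of vertices of degree one and $I=V(T)\setminus L$ for the set of vertices of degree at least two, and fix a maximum matching $M=\{e_1,\dots,e_p\}$, so that $|M|=\nu(T)=p$. The first observation I would record is that every edge $uv$ of $T$ meets $I$: if both $u$ and $v$ had degree one, then $uv$ would be a connected component isomorphic to $K_2$, contradicting the connectivity of $T$ once $n\ge 3$. (The only exception is $T\cong P_2$, which I would dispose of separately at the outset.)

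For the first assertion I would apply this observation to each matching edge. Each $e_i$ has at least one endpoint in $I$; choose one such endpoint $w_i$. Since the edges of a matching are pairwise non-incident, the vertices $w_1,\dots,w_p$ are pairwise distinct, so $|I|\ge p$. Consequently the number of degree-one vertices is $|L|=n-|I|\le n-p$, which is exactly the claimed bound.

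For the second assertion I would analyse the equality case by bookkeeping how $M$ meets $I$. Assume $|L|=n-p$, equivalently $|I|=p$. For each $i$ let $c_i\in\{1,2\}$ be the number of endpoints of $e_i$ lying in $I$; by the first observation $c_i\ge 1$. Because the matching edges are vertex-disjoint, $\sum_{i=1}^{p}c_i$ counts distinct internal vertices and is therefore at most $|I|=p$; combined with $c_i\ge 1$ this forces $\sum_{i=1}^{p}c_i=p$, so every $c_i=1$ and, moreover, all $p$ internal vertices are covered by $M$. Hence each matching edge joins exactly one internal vertex to one leaf, and every vertex of degree at least two is $M$-matched to a leaf, in particular adjacent to a leaf. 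The argument is short; the only delicate point is the counting in this equality step (together with remembering to exclude the degenerate $n\le 2$ cases), since one must extract simultaneously that each $c_i=1$ and that no internal vertex is left uncovered.
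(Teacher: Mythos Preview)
Your argument is correct. The paper does not supply its own proof of this lemma; it is quoted verbatim from \cite{yan}, so there is no in-paper argument to compare against. Your approach---fixing a maximum matching $M$, observing that for $n\ge 3$ no edge of a tree can join two leaves, and then counting how many endpoints of $M$ fall in the set $I$ of internal vertices---is a clean and self-contained proof. The first part follows immediately from the injection $e_i\mapsto w_i\in I$, and in the equality case your double inequality $p\le \sum_i c_i\le |I|=p$ simultaneously forces $c_i=1$ for all $i$ and shows that $M$ saturates $I$, which is exactly what is needed.

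One small remark on the exceptional case: you correctly flag $T\cong P_2$, but note that this is not merely a case to ``dispose of''---the lemma as stated is actually \emph{false} for $P_2$ (two leaves versus $n-p=1$). In practice the lemma is only invoked in the paper for trees with $|V(T_0)|\ge 2$ plus additional attached vertices, so $n\ge 3$ is implicit; still, it would be cleaner to state the hypothesis $n\ge 3$ explicitly rather than leave it as something to be handled ``at the outset.'' The case $n=1$ is vacuous (no leaves, $p=0$, no internal vertices), so no issue arises there.
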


\section{Ordering complements of trees with respect to their matchings}
 For convenience,  we use the same definitions of trees which are defined in \cite{yan}.
\begin{definition}\label{art31}
Let $T_{1}$  be a tree with $n+m+k$ vertices shown in Figure \ref{fig1}, where $T_{0}$ is a tree with $k$ vertices ($k\geq2$) and $u$ a vertex of $T_{0}$, $n\geq1$ and  $m \geq1$. Suppose $T_{2}$ is a tree with $n+m+k$
vertices obtained from $T_{0}$ by attaching a path $P_{m+n}$ to u in $T_{0}$ (see Figure \ref{fig1}).
We designate the transformation from $T_{1}$ to $T_{2}$ as of type {\bf 1} and denote it by $\mathcal{F}_{1}$: $T_{1}\hookrightarrow T_{2}$ or $\mathcal{F}_{1}(T_{1})=T_{2}$.
\end{definition}
 \begin{figure}[htbp]
\begin{center}
\includegraphics[scale=0.6]{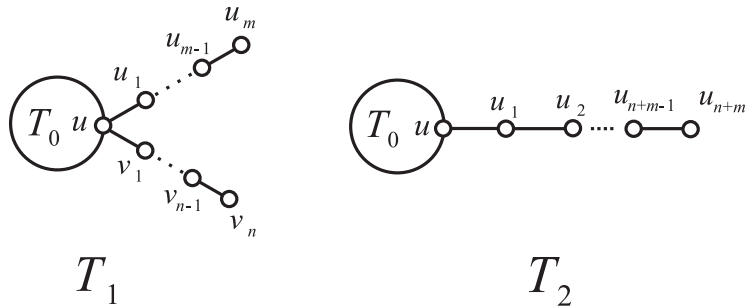}
\caption{\label{fig1}\small
{Two trees $T_{1}$ and $T_{2}$.}}
\end{center}
\end{figure}

\begin{theorem}\label{art32}
Let  $T_{1}$ and $T_{2}$ be the
trees with $m+n+k$ vertices defined in Definition \ref{art31}. Then $\overline{T_{2}}\succ\overline{T_{1}} $.
\end{theorem}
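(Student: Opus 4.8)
The plan is to reduce $\overline{T_2}\succ\overline{T_1}$ to a single explicit matching-polynomial identity for the trees $T_1,T_2$ themselves, and then carry it over to the complements by Lov\'{a}sz's formula (Lemma~\ref{art21}). Put $N=m+n+k$, let $v_1,\dots,v_d$ be the neighbours of $u$ in $T_0$, and recall (Figure~\ref{fig1}) that $T_1$ is $T_0$ with two pendant paths $P_m$ and $P_n$ attached at $u$, while $T_2$ is $T_0$ with one pendant path $P_{m+n}$ attached at $u$.

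First I would compute $\mu(T_2)-\mu(T_1)$. Writing $A=\mu(T_0)$, $B=\mu(T_0-u)$ and deleting the pendant edge at $u$ via Lemma~\ref{art22}(i)--(ii) gives $\mu(T_2)=A\mu(P_{m+n})-B\mu(P_{m+n-1})$, and the same deletion performed twice for $T_1$ gives $\mu(T_1)=A\mu(P_m)\mu(P_n)-B\mu(P_{m-1})\mu(P_n)-B\mu(P_m)\mu(P_{n-1})$. Subtracting and simplifying all path products with Lemma~\ref{art23} (in particular its special case $\mu(P_t)=x\mu(P_{t-1})-\mu(P_{t-2})$), the $A$- and $B$-parts collapse to $\mu(T_2)-\mu(T_1)=\mu(P_{m-1})\mu(P_{n-1})(xB-A)$; and $xB-A=x\mu(T_0-u)-\mu(T_0)=\sum_{j=1}^{d}\mu(T_0-u-v_j)$ by Lemma~\ref{art22}(iii). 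Hence, using Lemma~\ref{art22}(i),
$$\mu(T_2,x)-\mu(T_1,x)=\sum_{j=1}^{d}\mu(F_j,x),\qquad F_j:=P_{m-1}\cup P_{n-1}\cup(T_0-u-v_j),$$
where every $F_j$ is a forest on exactly $N-4$ vertices.

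Comparing coefficients of $x^{N-2r}$ on the two sides, formula (1) yields $m(T_2,r)-m(T_1,r)=\sum_{j=1}^{d}m(F_j,r-2)$ for every $r$ (with $m(\cdot,s)=0$ for $s<0$). Feeding this into Lemma~\ref{art21} applied to $\overline{T_1}$ and to $\overline{T_2}$, whose complements are $T_1$ and $T_2$, we get
$$m(\overline{T_2},r)-m(\overline{T_1},r)=\sum_{j=1}^{d}\ \sum_{i=2}^{r}(-1)^{i}\binom{N-2i}{2r-2i}(2r-2i-1)!!\,m(F_j,i-2).$$
For each fixed $j$ the substitution $i\mapsto i+2$ turns the inner sum into precisely the right-hand side of Lemma~\ref{art21} written for the graph $\overline{F_j}$ on $N-4$ vertices at parameter $r-2$; thus the inner sum equals $m(\overline{F_j},r-2)$, so
$$m(\overline{T_2},r)-m(\overline{T_1},r)=\sum_{j=1}^{d}m(\overline{F_j},r-2)\ \geq\ 0\qquad\text{for all }r.$$
Taking $r=2$ gives $m(\overline{T_2},2)-m(\overline{T_1},2)=\sum_{j=1}^{d}m(\overline{F_j},0)=d\geq 1$, since $T_0$ is a tree on $k\geq2$ vertices and so $u$ has a neighbour. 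Together with the displayed inequality for all $r$ this is exactly $\overline{T_2}\succ\overline{T_1}$ (and then $ME(\overline{T_2})>ME(\overline{T_1})$ by (4)).

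The two manipulations --- collapsing $\mu(T_2)-\mu(T_1)$ to the product form, and re-indexing the double sum --- are routine. The step needing genuine care is verifying that after $i\mapsto i+2$ the inner sum coincides term by term with Lov\'{a}sz's formula for $\overline{F_j}$, i.e.\ that the drop of $4$ in the vertex count of $F_j$ is compensated exactly by the drop of $2$ in the matching index; one must also check that the degenerate cases $m=1$ or $n=1$ (where $P_0$ is the empty graph, $\mu(P_0)=1$, $\mu(P_{-1})=0$) and $k=2$ (where $T_0-u-v_j$ may be empty) are covered by the conventions $(-1)!!=0!!=1$ and $m(\cdot,s)=0$ for $s<0$ already in force in Lemmas~\ref{art21} and~\ref{art23}.
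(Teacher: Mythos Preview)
Your proof is correct and follows essentially the same route as the paper: you obtain the identity $\mu(T_2)-\mu(T_1)=\sum_j\mu\bigl(P_{m-1}\cup P_{n-1}\cup(T_0-u-v_j)\bigr)$ (the paper's equation~(7), with sign reversed), then push it through Lov\'{a}sz's formula and re-index to get $m(\overline{T_2},r)-m(\overline{T_1},r)=\sum_j m(\overline{F_j},r-2)$, which is exactly the paper's equation~(10). The only cosmetic difference is that you derive the matching-polynomial identity by deleting the two pendant edges at $u$ (Lemma~\ref{art22}(ii)) and then invoking Lemma~\ref{art22}(iii) once for $xB-A$, whereas the paper expands both $\mu(T_1)$ and $\mu(T_2)$ directly at the vertex $u$ via Lemma~\ref{art22}(iii); both computations land on the same expression after applying Lemma~\ref{art23}.
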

\begin{proof}
By  Lemma \ref{art22},\\
\begin{equation*}
\begin{split}
\mu(T_{1})&=x\mu(T_{0}-u)\mu(P_{m})\mu(P_{n})-\mu(T_{0}-u)\mu(P_{m-1})\mu(P_{n})
-\mu(T_{0}-u)\mu(P_{m})\mu(P_{n-1})\\
&\quad -\sum_{v\in V(T_{0})\atop uv\in E(T_{0})}\mu(T_{0}-u-v)\mu(P_{m})\mu(P_{n}),\\
\mu(T_{2})&=x\mu(T_{0}-u)\mu(P_{m+n})-\mu(T_{0}-u)\mu(P_{m+n-1})
-\sum_{v\in V(T_{0})\atop uv\in E(T_{0})}\mu(T_{0}-u-v)\mu(P_{m+n}),
\end{split}
\end{equation*}
where the above sums range over all vertices of $T_{0}$ adjacent to $u$. Hence
\begin{equation*}
\begin{split}
\mu(T_{1})-\mu(T_{2})&=x\mu(T_{0}-u)[\mu(P_{m})\mu(P_{n})-\mu(P_{m+n})]-\mu(T_{0}-u)
[\mu(P_{m-1})\mu(P_{n})-\mu(P_{m+n-1})\\
&\quad +\mu(P_{m})\mu(P_{n-1})]-\left[\mu(P_{m})\mu(P_{n})-\mu(P_{m+n})\right]\sum_{v\in V(T_{0})\atop uv\in E(T_{0})}\mu(T_{0}-u-v)
\end{split}
\end{equation*}
By (6) and a routine calculation,
$$
\mu(T_{1})-\mu(T_{2})=-\sum_{v\in V(T_{0})\atop uv\in E(T_{0})}\mu(T_{0}-u-v)\mu(P_{m-1})\mu(P_{n-1}).\eqno(7)
$$
For an arbitrary vertex $v$ adjacent to $u$ in $T_{0}$, let $T_{v}^{*}$ be the forest $(T_{0}-u-v)\cup P_{m-1}\cup P_{n-1}$, which has $n+m+k-4$ vertices. By (5), we obtain
$$m(\overline{T_{1}},r)-m(\overline{T_{2}},r)=\sum_{i=0}^{r}(-1)^{i}{n+m+k-2i\choose 2r-2i}(2r-2i-1)!![m(T_{1},i)-m(T_{2},i)].\eqno(8)$$
Note that $m(T_1,0)=m(T_2,0)$ and $m(T_1,1)=m(T_2,1)$. Hence
$$
m(\overline{T_{1}},r)-m(\overline{T_{2}},r)=-\sum_{v\in V(T_{0})\atop uv\in E(T_{0})}\sum_{i=2}^r(-1)^{i}{n+m+k-2i\choose 2r-2i}(2r-2i-1)!!m(T_{v}^{*},i-2). \eqno{(9)}
$$
Note that $T_{v}^{*}$ has $n+m+k-4$ vertices. So
$$m(\overline {T_v^*},r-2)=\sum_{j=0}^{r-2}(-1)^{j}{n+m+k-4-2j\choose 2(r-2)-2j}(2(r-2)-2j-1)!!m(T_{v}^{*},j)$$
$$=\sum_{i=2}^r(-1)^{i}{n+m+k-2i\choose 2r-2i}(2r-2i-1)!!m(T_{v}^{*},i-2).$$
Hence
$$
m(\overline{T_{1}},r)-m(\overline{T_{2}},r)=-\sum_{v\in V(T_{0})\atop uv\in E(T_{0})}m(\overline{T_{v}^{*}},r-2).\eqno(10)
$$
By the definition of $m(G,r)$ and (10), we have $m(\overline{T_{v}^{*}},r-2)\geq 0$, which implies
 $m(\overline{T_{1}},r)\leq m(\overline{T_{2}},r)$. Particularly, if $r=2$, then $m(\overline{T_{1}},r)-m(\overline{T_{2}},r)\leq -1$. By (2),  $\overline{T_{2}}\succ\overline{T_{1}}$.
\end{proof}

\begin{remark}
By Theorem \ref{art32} and (4), we obtain immediately a result as follows: If $T_{1}$ and $T_{2}$ are the two
trees defined in Definition \ref{art31}, then $ME(\overline{T_{2}})>ME(\overline{T_{1}}) $. Additionally,  by the definition of the Hosoya index and Theorem \ref{art32}, it is not difficult  to see that $Z(\overline{T_{2}})>Z(\overline{T_{1}})$.
\end{remark}

\begin{definition}\label{art33}
 Let $T_{3}$ and $T_{4}$ be two trees with $m+n+s+1$ vertices shown in Fig. \ref{fig2}, where $s\geq m\geq 2, n \geq 1$.
We designate the transformation from $T_{3}$ to $T_{4}$ in Figure \ref{fig2} as of type {\bf 2} and denote it by $\mathcal{F}_{2}$: $T_{3}\mapsto T_{4}$ or $\mathcal{F}_{2}(T_{3})=T_{4}$.
\end{definition}
 \begin{figure}[htbp]
\begin{center}
\includegraphics[scale=0.60]{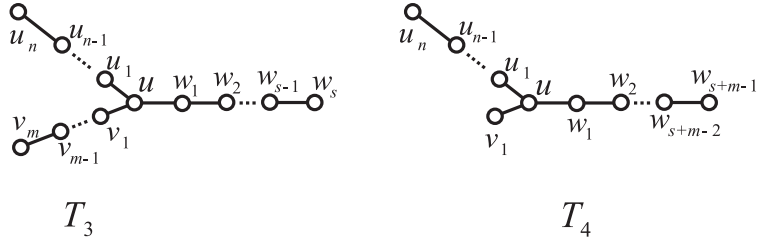}
\caption{\label{fig2}\small
{Two trees $T_{3}$ and $T_{4}$.}}
\end{center}
\end{figure}

\begin{theorem}\label{art34}
Let $T_{3}$ and $T_{4}$ be two trees with $m+n+s+1$ vertices defined in Definition \ref{art33}. Then $\overline{T_{4}}\succ\overline{T_{3}}$.
\end{theorem}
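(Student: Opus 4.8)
The plan is to run the same argument used for Theorem~\ref{art32}. Let $T^{\circ}$ denote the common part of $T_{3}$ and $T_{4}$ obtained by deleting from either tree the two pendant paths $P_{m}$ and $P_{s}$ that the transformation $\mathcal{F}_{2}$ modifies, and let $w$ be the vertex at which these legs are attached. Applying Lemma~\ref{art22}$(iii)$ at $w$ and then Lemma~\ref{art23} to the path factors, I would write $\mu(T_{3})$ and $\mu(T_{4})$ as polynomial combinations of $\mu(P_{a})$'s multiplied by the matching polynomials of $T^{\circ}-w$ and of $T^{\circ}-w-v$ for neighbours $v$ of $w$. Subtracting, all pieces that do not involve both of the shifting legs cancel, and after repeated use of the identity (6) a routine simplification should collapse $\mu(T_{3})-\mu(T_{4})$ into $-\mu(F)$, where $F$ is an explicit forest on $m+n+s-3$ vertices (four fewer than $T_{3}$), assembled from $T^{\circ}-w-v$ together with two shorter paths; the hypothesis $s\ge m\ge 2$ is what forces the surviving term to carry this sign. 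Equivalently, $m(T_{3},i)-m(T_{4},i)=-m(F,i-2)\le 0$ for every $i$, and $=0$ for $i\le 1$ since $T_{3}$ and $T_{4}$ have equally many vertices and edges.

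Next, by Lov\'asz's formula (5) applied to $\overline{T_{3}}$ and $\overline{T_{4}}$ (just as in the step producing (8)),
$$m(\overline{T_{3}},r)-m(\overline{T_{4}},r)=\sum_{i=0}^{r}(-1)^{i}{m+n+s+1-2i\choose 2r-2i}(2r-2i-1)!!\,[m(T_{3},i)-m(T_{4},i)].$$
Substituting $m(T_{3},i)-m(T_{4},i)=-m(F,i-2)$, shifting the summation index by $2$, and recognizing the resulting alternating sum as (5) applied to $\overline{F}$ — exactly the manipulation that led from (9) to (10) — the right-hand side becomes $-m(\overline{F},r-2)$. Since $m(\overline{F},r-2)\ge 0$ always, this gives $m(\overline{T_{3}},r)\le m(\overline{T_{4}},r)$ for all $r$; taking $r=2$ yields $m(\overline{T_{3}},2)-m(\overline{T_{4}},2)=-m(\overline{F},0)=-1<0$, so the inequality is strict for $k=2$. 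By (2), $\overline{T_{4}}\succ\overline{T_{3}}$.

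The main obstacle is the first step. Because $\mathcal{F}_{2}$ shifts a pendant edge between two \emph{unequal} legs rather than coalescing two legs as in $\mathcal{F}_{1}$, the expansion of $\mu(T_{3})-\mu(T_{4})$ involves more terms, and one must verify that, after using (6), it genuinely reduces to \emph{minus} the matching polynomial of a single forest (or a nonnegative combination of such) and not to a difference of two of them — otherwise the termwise comparison of $m(T_{3},i)$ with $m(T_{4},i)$ breaks down. If such a stubborn difference does appear, I would fall back on iterating $\mathcal{F}_{2}$ through the intermediate trees with leg lengths $(m-1,s+1),(m-2,s+2),\dots$ and induct on $m$, reducing to the base case $m=2$, while keeping careful track of $s\ge m\ge 2$ since that is precisely what pins down the sign of the remaining term. (The polynomial identity needed here is the one already carried out in \cite{yan} for the Hosoya-index version of this transformation, so it can be imported with only the bookkeeping adapted to $\mu$.)
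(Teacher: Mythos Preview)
Your outline is exactly the paper's strategy, and the second half (passing through Lov\'asz's formula and re-summing to get $-m(\overline{F},\cdot)$) is correct. The first half, however, contains a concrete bookkeeping error. When you actually carry out the expansion at the branch vertex and simplify with Lemma~\ref{art23}, the difference is
\[
\mu(T_{3})-\mu(T_{4})=-\,\mu(P_{m-2})\,\mu(P_{n-1})\,\mu(P_{s-2}),
\]
so the forest $F=P_{m-2}\cup P_{n-1}\cup P_{s-2}$ has $m+n+s-5$ vertices, \emph{six} fewer than $T_{3}$, not four. Consequently $m(T_{3},i)-m(T_{4},i)=-m(F,i-3)$ (vanishing for $i\le 2$), the index shift in the Lov\'asz step is by~$3$, and one obtains
\[
m(\overline{T_{3}},r)-m(\overline{T_{4}},r)=-\,m(\overline{F},\,r-3).
\]
Strictness therefore first appears at $r=3$, where $m(\overline{F},0)=1$.

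Your claimed strictness at $r=2$ is in fact false: $T_{3}$ and $T_{4}$ are both three-legged spiders and hence share the same degree sequence, so $m(T_{3},2)=m(T_{4},2)$, which via (5) forces $m(\overline{T_{3}},2)=m(\overline{T_{4}},2)$. Thus with your stated shift the argument would only yield $\overline{T_{4}}\succeq\overline{T_{3}}$, not the strict relation. Fixing the vertex count of $F$ and moving the strict step to $r=3$ repairs the proof completely; no fallback induction on $m$ is needed, and the hypothesis $s\ge m\ge 2$, $n\ge 1$ is used only to guarantee that $P_{m-2}$, $P_{n-1}$, $P_{s-2}$ are all defined.
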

\begin{proof}
Similar to the proof of Theorem \ref{art32}, we can obtain that
$$\mu(T_{3})-\mu(T_{4})=-\mu(P_{m-2})\mu(P_{n-1})\mu(P_{s-2}).$$
Furthermore, we also have
$$
m(\overline{T_{3}},r)-m(\overline{T_{4}},r)=-m(\overline{P_{m-2}\cup P_{n-1}\cup P_{s-2}},r-3).\eqno(11)$$
\indent By the definition of $m(G,r)$ and (11), we have $m(\overline{P_{m-2}\cup P_{n-1}\cup P_{s-2}},r-3)\geq0$, which implies $m(\overline{T_{3}},r)\leq m(\overline{T_{4}},r)$. Specially, if $r=3$ then $m(\overline{P_{m-2}\cup P_{n-1}\cup P_{s-2}},r-3)=1$. This means, by (2), that
  $\overline{T_{4}}\succ\overline{T_{3}}$.
The proof is complete.
\end{proof}

\begin{definition}\label{art35} Let $T_{5}$ and $T_{6}$ be two trees with $m+n+2$ vertices shown in Fig. \ref{fig3}, where $m\geq n\geq 2$.
We designate the transformation from $T_{5}$ to $T_{6}$ in Figure \ref{fig3} as of type {\bf 3} and denote it by $\mathcal{F}_{3}$: $T_{5}\rightarrow T_{6}$ or $\mathcal{F}_{3}(T_{5})=T_{6}$.
\end{definition}
\begin{figure}[htbp]
\begin{center}
\includegraphics[scale=0.50]{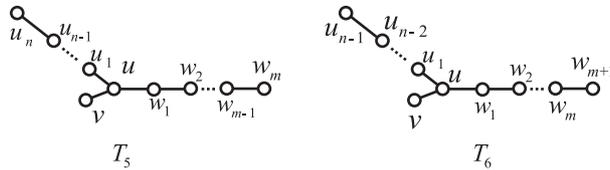}
\caption{\label{fig3}\small
{Two trees $T_{5}$ and $T_{6}$.}}
\end{center}
\end{figure}
\begin{theorem}\label{art36}
Let $T_{5}$ and $T_{6}$ be two trees with $m+n+2$ vertices defined in Definition \ref{art35}. Then $\overline{T_{6}}\succ\overline{T_{5}}$.
\end{theorem}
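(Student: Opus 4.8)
The plan is to follow, step for step, the argument already used for Theorems~\ref{art32} and~\ref{art34}. \emph{First}, I would expand $\mu(T_5)$ and $\mu(T_6)$ by repeated application of the edge- and vertex-deletion identities of Lemma~\ref{art22}, unfolding the recursion at the place where $T_5$ and $T_6$ differ in Figure~\ref{fig3}, so that each of $\mu(T_5)$, $\mu(T_6)$ is written as a combination of $\mu$ of the common part times path factors $\mu(P_{\bullet})$. Subtracting and collapsing the path factors by means of the identity~(6) of Lemma~\ref{art23}, I expect all terms to cancel except one, leaving
\[
\mu(T_5)-\mu(T_6)=-\,\mu(P_{a})\,\mu(P_{b})
\]
for suitable $a,b\ge 1$ with $a+b=m+n-2$, read off from Figure~\ref{fig3} (the natural candidate being $a=m-1$, $b=n-1$; the hypothesis $m\ge n\ge 2$ ensures that no path of order $\le 0$ occurs). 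In particular this polynomial has degree $m+n-2$, i.e.\ four less than $|V(T_5)|=m+n+2$, so $m(T_5,i)=m(T_6,i)$ for $i=0,1$ and $m(T_5,i)-m(T_6,i)=-\,m(P_{a}\cup P_{b},\,i-2)$ for $i\ge 2$.

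\emph{Next}, I would apply the Lov\'{a}sz formula~(5) to $\overline{T_5}$ and to $\overline{T_6}$ (both on $m+n+2$ vertices) and subtract, exactly as in the passage~(8)--(10): the $i=0,1$ contributions drop out, and the same re-indexing that produced~(10) now gives
\[
m(\overline{T_5},r)-m(\overline{T_6},r)=-\,m\!\left(\overline{F},\,r-2\right),\qquad F:=P_{a}\cup P_{b}.
\]
Since $m(\overline{F},k)\ge 0$ for every $k$, this yields $m(\overline{T_5},r)\le m(\overline{T_6},r)$ for all $r$; and for $r=2$ the right-hand side equals $-\,m(\overline{F},0)=-1<0$, so the inequality is strict at $r=2$. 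By the definition~(2) of the quasi-order, $\overline{T_6}\succ\overline{T_5}$.

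The main obstacle is the first step: one has to pick the correct vertex (or edge) of Figure~\ref{fig3} at which to start the recursion and then check that the telescoping through~(6) really does annihilate everything but the single product $-\mu(P_a)\mu(P_b)$; once that identity is secured, the second step and the conclusion are purely formal and run word for word as in the earlier proofs. A secondary point to watch is the bookkeeping of small path orders (using $\mu(P_0)=1$ and $\mu(P_1)=x$), which is exactly what the constraint $m\ge n\ge 2$ is there to control, and which also guarantees that the shift ``$r-2$'' and the forest $\overline{F}$ are genuine, so that the strict relation $\overline{T_6}\succ\overline{T_5}$ (not merely $\overline{T_6}\succeq\overline{T_5}$) is justified.
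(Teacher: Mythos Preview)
Your overall strategy---compute $\mu(T_5)-\mu(T_6)$, recognise it as the matching polynomial of a smaller forest, then run the Lov\'asz inversion~(5) exactly as in~(8)--(10)---is precisely the paper's. What is off is the specific output of the first step. The difference is \emph{not} a product $-\mu(P_{m-1})\mu(P_{n-1})$ of degree $m+n-2$; the paper obtains
\[
\mu(T_5)-\mu(T_6)=-\,\mu(P_{m-n}),
\]
a single path of order $m-n$. Thus the cancellation goes much deeper than you anticipate: $m(T_5,i)=m(T_6,i)$ for all $i\le n$, not merely for $i=0,1$. Consequently the Lov\'asz step does not yield a shift by $2$ but by $n+1$:
\[
m(\overline{T_5},r)-m(\overline{T_6},r)=-\,m\!\left(\overline{P_{m-n}},\,r-n-1\right),
\]
and the strict inequality witnessing $\overline{T_6}\succ\overline{T_5}$ occurs at $r=n+1$ (where the right-hand side equals $-1$), not at $r=2$.

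So the ``main obstacle'' you flagged is real, and your guess for the shape of the residue is wrong in both form (one path, not two) and degree (it drops by $2(n+1)$, not by $4$). Once you redo the expansion at the branch vertex of Figure~\ref{fig3} and telescope via~(6), everything else in your write-up (the re-indexing argument, nonnegativity of $m(\overline{\,\cdot\,},k)$, the strict witness) goes through verbatim with $P_{m-n}$ in place of $P_a\cup P_b$ and $r-n-1$ in place of $r-2$.
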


\begin{proof}
Similar to the proof of Theorem \ref{art31}, we have
$$\mu(T_{5})-\mu(T_{6})=-\mu(P_{m-n})$$
and
$$m(\overline{T_{5}},r)-m(\overline{T_{6}},r)=-m(\overline{P_{m-n}},r-n-1).\eqno(12)$$

\indent
By the definition of $m(G,r)$ and (12), we have $m(\overline{P_{m-n}},r-n-1)\geq0$, which indicates $m(\overline{T_{5}},r)\leq m(\overline{T_{6}},r)$. Specially,  when $r= n+1$, then $m(\overline{P_{m-n}},r-n-1)= 1$. By (2), we get that $\overline{T_{4}}\succ\overline{T_{3}}$.
\end{proof}

\begin{definition}\label{art37}
Suppose that $T'_{1}$ and $T'_{2}$ are two trees with $m$ ($m > 1$) vertices
and with $n$ ($n > 1$) vertices, respectively. Take one vertex $u$ of $T'_{1}$ and one $v$ of $T'_{2}$.
Construct two trees $T_{7}$ and $T_{8}$ with $m+n$ vertices as follows. The vertex set $V (T_{7})$ of
$T_{7}$ is $V (T'_{1})\cup V (T'_{2})$ and the edge set of $T_{7}$ is $E(T'_{1} )\cup E(T'_{2})\cup {uv}$. $T_{8}$
is the tree obtained from $T'_{1}$ and $T'_{2}$ by identifying the vertex $u$ of $T'_{1}$ and the vertex $v$ of
 $T'_{2}$ and adding a pendent edge $uw=vw$ to this new vertex $u$ ($=v$). The result graphs see Fig. \ref{fig4}.
 We
designate the transformation from $T_{7}$ to $T_{8}$ as of type {\bf 4} and denote it by $\mathcal{F}_{4}$: $T_{7}\looparrowright  T_{8}$ or $\mathcal{F}_{4}(T_{7})=T_{8}$.
\end{definition}
 \begin{figure}[htbp]
\begin{center}
\includegraphics[scale=1.20]{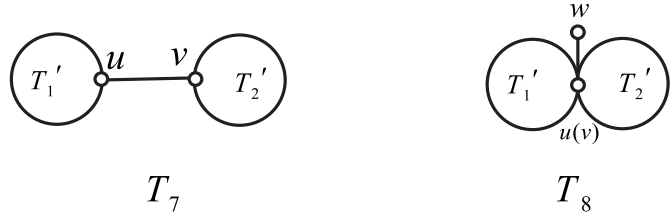}
\caption{\label{fig4}\small
{Two trees $T_{7}$ and $T_{8}$.}}
\end{center}
\end{figure}
\begin{figure}[htbp]
\begin{center}
\includegraphics[scale=0.60]{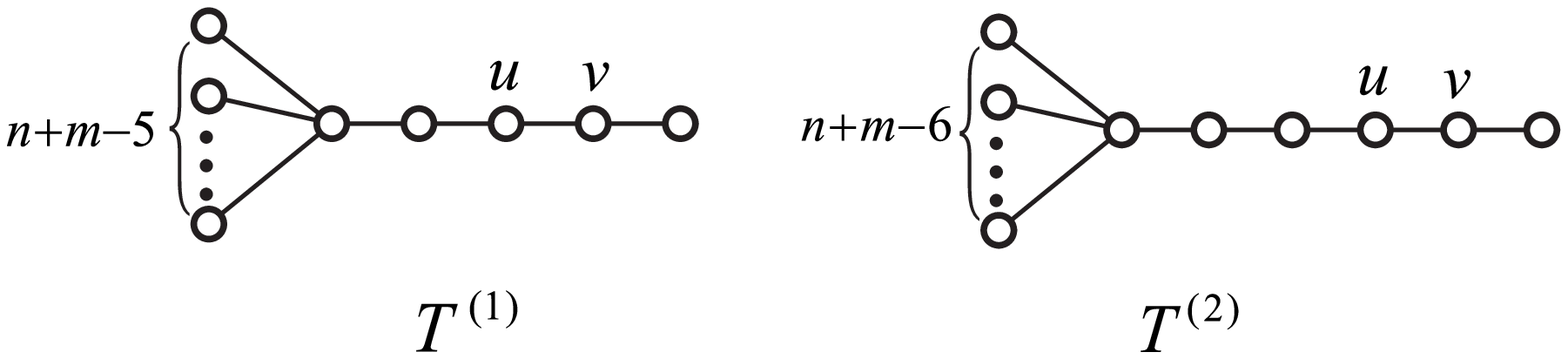}
\caption{\label{fig41}\small
{Two trees $T^{(1)}$ and $T^{(2)}$.}}
\end{center}
\end{figure}

\begin{theorem}\label{art38}
Let $T_{7}$ and $T_{8}$ be two trees with $m+n$ vertices defined in
Definition \ref{art37}. Then $\overline{T_{7}}\succ \overline{T_{8}}$.
\end{theorem}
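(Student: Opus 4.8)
The plan is to follow the pattern of Theorem~\ref{art32}: first derive a closed form for $\mu(T_7)-\mu(T_8)$, show it is a non-negative integer combination of matching polynomials of forests on $m+n-4$ vertices, and then push this through to the complements by the Lov\'asz formula of Lemma~\ref{art21}.

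First I would compute the two matching polynomials. Since $uv$ is a cut-edge of $T_7$, Lemma~\ref{art22}$(ii)$ together with $(i)$ gives $\mu(T_7)=\mu(T'_1)\mu(T'_2)-\mu(T'_1-u)\mu(T'_2-v)$. For $T_8$, write $z$ for the identified vertex ($u=v$) and $w$ for its pendent neighbour; applying Lemma~\ref{art22}$(iii)$ at $z$, splitting the neighbour sum into the part inside $T'_1$ and the part inside $T'_2$, and rewriting $\sum_{y\sim u}\mu(T'_1-u-y)=x\mu(T'_1-u)-\mu(T'_1)$ (again Lemma~\ref{art22}$(iii)$) and symmetrically for $T'_2$, one gets $\mu(T_8)=x\mu(T'_1)\mu(T'_2-v)+x\mu(T'_1-u)\mu(T'_2)-x^{2}\mu(T'_1-u)\mu(T'_2-v)-\mu(T'_1-u)\mu(T'_2-v)$. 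Subtracting, the right-hand sides collapse into a product:
$$\mu(T_7)-\mu(T_8)=\bigl[\mu(T'_1)-x\mu(T'_1-u)\bigr]\bigl[\mu(T'_2)-x\mu(T'_2-v)\bigr]=\Bigl(\sum_{y\sim u}\mu(T'_1-u-y)\Bigr)\Bigl(\sum_{y'\sim v}\mu(T'_2-v-y')\Bigr).$$
Expanding the product and using Lemma~\ref{art22}$(i)$, this equals $\sum_{y\sim u}\sum_{y'\sim v}\mu\bigl(T^{*}_{y,y'}\bigr)$, where $T^{*}_{y,y'}:=(T'_1-u-y)\cup(T'_2-v-y')$ is a forest on $m+n-4$ vertices; here $y$ ranges over the neighbours of $u$ in $T'_1$ and $y'$ over the neighbours of $v$ in $T'_2$, and both index sets are non-empty since $m,n>1$.

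Comparing coefficients of $x^{(m+n)-2i}$ then yields $m(T_7,i)=m(T_8,i)$ for $i=0,1$ and $m(T_7,i)-m(T_8,i)=\sum_{y\sim u}\sum_{y'\sim v}m\bigl(T^{*}_{y,y'},i-2\bigr)$ for $i\ge 2$. Next I would repeat the Lov\'asz bookkeeping of equations~(8)--(10): applying Lemma~\ref{art21} to $\overline{T_7}$ and $\overline{T_8}$ (both on $N:=m+n$ vertices), the terms with $i\le 1$ vanish, and because each $T^{*}_{y,y'}$ has $N-4$ vertices, Lemma~\ref{art21} applied to $T^{*}_{y,y'}$ rewrites the remaining alternating sum as $m(\overline{T^{*}_{y,y'}},r-2)$. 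This gives
$$m(\overline{T_7},r)-m(\overline{T_8},r)=\sum_{y\sim u}\sum_{y'\sim v}m\bigl(\overline{T^{*}_{y,y'}},r-2\bigr)\ \ge\ 0 .$$
Taking $r=2$, every summand equals $1$ and there are $\deg_{T'_1}(u)\cdot\deg_{T'_2}(v)\ge 1$ of them, so the difference is strictly positive; hence $\overline{T_7}\succ\overline{T_8}$ by $(2)$.

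I expect the only genuinely delicate point to be Step~1: carrying out the vertex-deletion identity at the coalescence vertex $z$ of $T_8$ correctly and checking that $\mu(T_7)-\mu(T_8)$ really does factor as claimed. Once the factorization — and hence its representation as $\sum_{y,y'}\mu(T^{*}_{y,y'})$ with each $T^{*}_{y,y'}$ a forest on $N-4$ vertices — is in hand, Steps~2 and~3 are a verbatim repetition of the computation already carried out for Theorem~\ref{art32}, and positivity follows automatically from $m(\overline{T^{*}_{y,y'}},r-2)\ge 0$.
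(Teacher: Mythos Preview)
Your argument is correct and follows essentially the same route as the paper: both derive the identity $\mu(T_7)-\mu(T_8)=\sum_{y\sim u}\sum_{y'\sim v}\mu\bigl((T'_1-u-y)\cup(T'_2-v-y')\bigr)$ and then apply the Lov\'asz formula of Lemma~\ref{art21} exactly as in the proof of Theorem~\ref{art32} to obtain $m(\overline{T_7},r)-m(\overline{T_8},r)=\sum_{y,y'}m(\overline{T^{*}_{y,y'}},r-2)\ge 0$, with strict positivity at $r=2$. The only cosmetic difference is that the paper expands $\mu(T_8)$ by deleting the pendant $w$ first and then expanding $x\mu(T_8-w)$ via (15)--(18), whereas you apply Lemma~\ref{art22}$(iii)$ directly at the coalescence vertex $z$ and recognise the resulting expression as the product $[\mu(T'_1)-x\mu(T'_1-u)][\mu(T'_2)-x\mu(T'_2-v)]$; this factorisation is a tidy shortcut but leads to the same equation~(19).
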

\begin{proof}
By Lemma \ref{art22},
$$\mu(T_{7})=\mu(T'_{1})\mu(T'_{2})-\mu(T'_{1}-u)\mu(T'_{2}-v),\eqno(13)$$
$$\mu(T_{8})=x\mu(T_{8}-w)-\mu(T'_{1}-u)\mu(T'_{2}-v),\eqno(14)$$
$$\mu(T'_{1})=x\mu(T'_{1}-u)-\sum_{i=1}^{s}\mu(T'_{1}-u-u_{i})\eqno(15)$$
and
$$\mu(T'_{2})=x\mu(T'_{2}-v)-\sum_{j=1}^{s}\mu(T'_{2}-v-v_{j}).\eqno(16)$$
where the first sum ranges over all vertices $u_{i}$ ($1\leq i \leq s$) of $T'_{1}$ adjacent to $u$
and the second sum ranges over all $v_{j}$ ($1\leq j \leq t$) of $T'_{2}$ adjacent to $v$.
By (15) and (16), we have
\begin{equation*}
\begin{split}
x\mu(T_{8}-w)&=x^{2}\mu(T'_{1}-u)\mu(T'_{2}-v)-x\sum_{j=1}^{t}\mu(T'_{1}-u)\mu(T'_{2}-v-v_{j})\\
&\quad -x\sum_{i=1}^{s}\mu(T'_{2}-v)\mu(T'_{1}-u-u_{i}) \qquad  \qquad \qquad \qquad \qquad \qquad \qquad \qquad \qquad \qquad (17)
\end{split}
\end{equation*}
and
\begin{equation*}
\begin{split}
\mu(T'_{1})\mu(T'_{2})&=x^{2}\mu(T'_{1}-u)\mu(T'_{2}-v)-x\sum_{j=1}^{t}\mu(T'_{1}-u)\mu(T'_{2}-v-v_{j})\\
&\quad -x\sum_{i=1}^{s}\mu(T'_{2}-v)\mu(T'_{1}-u-u_{i})+\sum_{1\leq i\leq s \atop 1\leq j\leq t}\mu(T'_{1}-u-u_{i})\mu(T'_{2}-v-v_{j}).  \qquad \quad \ \ (18)
\end{split}
\end{equation*}
Combining (13), (14), (17) and (18),
$$\mu(T_{7})-\mu(T_{8})=\sum_{1\leq i\leq s \atop 1\leq j\leq t}\mu(T'_{1}-u-u_{i})\mu(T'_{2}-v-v_{j}).\eqno(19)$$
\indent As in the proof of Theorem \ref{art32}, we can show that
$$m(\overline{T_{7}},r)-m(\overline{T_{8}},r)=\sum_{1\leq i\leq s \atop 1\leq j\leq t}m(\overline{\mu(T'_{1}-u-u_{i})\cup \mu(T'_{2}-v-v_{j})},r-2),$$
which implies that
$$m(\overline{T_{7}},r)\geq m(\overline{T_{8}},r).$$
Note that $m(\overline{T_{7}},r)- m(\overline{T_{8}},r)\geq1$ when $r=2$. So, by (2), the theorem holds.
\end{proof}

\begin{remark}
For the trees Fig.\ref{fig41}, we note that neither tree $T^{(1)}$ nor tree $T^{(2)}$ can be transformed into $T^{p}_{m+n}$ by a single transformation {\bf 4}. Hence if $T_{8}$ in Theorem \ref{art38} is $T^{p}_{m+n}$, then $\overline{T_{7}}\succ\overline{T_{8}}=\overline{T^{p}_{m+n}}$. Particularly,
$\overline{T^{p}_{n}}\succ\overline{T^{p-1}_{n}}$ for $n\geq5$. Similarly, it is easy to show that the statement holds.
\end{remark}

\begin{definition}\label{art39}
 Suppose that $T_{9}$ is a tree with $n$ vertices and with the edge-independence
number $p$ shown in Fig. \ref{fig5} which has exactly $n-p$ pendent vertices, where
$|V(T_{0})|\geq2$ and $r\geq2$. Let $T_{10}$ be the tree with $n$ vertices shown in Fig. \ref{fig5},
which is obtained from $T_{9}$.
 We designate the transformation from $T_{9}$ to $T_{10}$ as of type
{\bf 5} and denote it by $\mathcal{F}_{5}$: $T_{9}\dashrightarrow  T_{10}$ or $\mathcal{F}_{5}(T_{9})=T_{10}$.
\end{definition}
 \begin{figure}[htbp]
\begin{center}
\includegraphics[scale=1.0]{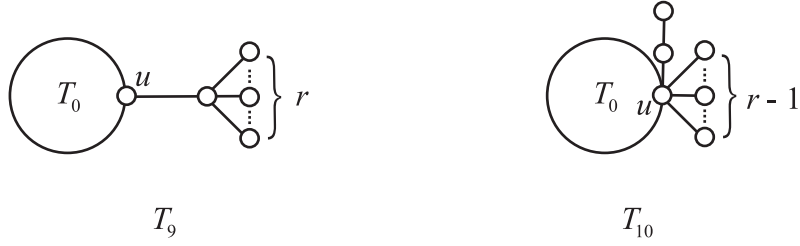}
\caption{\label{fig5}\small
{Two trees $T_{9}$ and $T_{10}$.}}
\end{center}
\end{figure}

\begin{theorem}\label{art310}
Let $T_{9}$ and $T_{10}$ be two trees with $n$ vertices defined in
Definition \ref{art39}. Then  $\overline{T_{9}}\succ \overline{T_{10}}$.
\end{theorem}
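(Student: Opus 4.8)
The plan is to follow the same template that proved Theorems~\ref{art32}, \ref{art34}, \ref{art36} and~\ref{art38}: first compute the difference of matching polynomials $\mu(T_9)-\mu(T_{10})$ by repeated use of Lemma~\ref{art22}, then transfer the resulting inequality from $m(T_9,i),m(T_{10},i)$ to $m(\overline{T_9},r),m(\overline{T_{10}},r)$ via the Lov\'{a}sz formula (5), exactly as in the passage from (7) to (10). Concretely, I would root both trees at the central vertex $u$ of $T_0$ that carries the pendant structure, apply Lemma~\ref{art22}$(iii)$ at $u$ in each of $T_9$ and $T_{10}$, and write both $\mu(T_9)$ and $\mu(T_{10})$ as $x\,\mu(T_0-u)\cdot(\text{pendant factor})$ minus correction terms indexed by the neighbours of $u$ in $T_0$, together with the contribution of the edges inside the pendant part. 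Since $T_9$ and $T_{10}$ differ only in how the $n-p$ pendant vertices and the $r$ extra edges are arranged around $u$ (a ``$K_{1,r}$ with pendant edges'' being replaced by the more star-like configuration of $T_{10}$), the $T_0$-part and the neighbour sum cancel, and I expect the difference to reduce to a single nonnegative product of matching polynomials of small paths and of $T_0-u$ (or of $T_0$ minus $u$ and a neighbour), say
$$\mu(T_9)-\mu(T_{10}) = \sum_{v\thicksim u \text{ in } T_0}\mu(T_0-u-v)\,\mu(P_{a})\,\mu(P_{b})\,x^{c}$$
for suitable small $a,b,c$ depending on $r$ and $p$; the precise shape is the routine computation I would not grind through here.

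Once the polynomial identity is in hand, the transfer step is mechanical and identical to the one used four times already: by (5) one has
$$m(\overline{T_9},r)-m(\overline{T_{10}},r)=\sum_{i\ge 0}(-1)^i\binom{n-2i}{2r-2i}(2r-2i-1)!!\,[m(T_9,i)-m(T_{10},i)],$$
and because $m(T_9,i)-m(T_{10},i)$ equals (up to sign) $m(F,i-j)$ for a fixed forest $F$ on $n-2j$ vertices (the disjoint union extracted from the polynomial difference above), the right-hand side collapses by the same reindexing trick to $\pm\,m(\overline{F},r-j)$. Since every $m(\overline{F},\cdot)\ge 0$, this yields $m(\overline{T_9},r)\ge m(\overline{T_{10}},r)$ for all $r$, and evaluating at the smallest $r$ for which $m(\overline{F},r-j)\ge 1$ (here $r=j$, using $m(\overline{F},0)=1$) gives strict inequality for that $r$. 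By the definition (2) of the quasi-order this is exactly $\overline{T_9}\succ\overline{T_{10}}$, which is the claim.

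The main obstacle is purely bookkeeping rather than conceptual: one must set up the pictures in Figure~\ref{fig5} precisely enough to identify which edges and pendant vertices are being moved, so that the cancellation of the $\mu(T_0-u)$-term and of the $\sum_{v\thicksim u}\mu(T_0-u-v)$-term actually goes through and leaves a \emph{single} clean nonnegative term. If the transformation $\mathcal{F}_5$ moves more than one pendant edge at once, the difference could a priori be a sum or a telescoping combination of several such terms; one then has to check that each summand keeps a fixed sign, which is where a naive application could fail. I would guard against this by, if necessary, factoring $\mathcal{F}_5$ as a composition of elementary one-edge moves and applying the argument to each, the way the remark after Theorem~\ref{art38} factors a transformation through repeated type~\textbf{4} moves. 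Finally, I would double-check the hypothesis $|V(T_0)|\ge 2$, $r\ge 2$, and that $T_9$ has exactly $n-p$ pendant vertices are all used: they are what guarantee $T_0-u$ and $T_0-u-v$ are genuine (possibly empty) graphs and that the extracted forest $F$ is nonempty, so that the strict-inequality evaluation point $r=j$ lies in the admissible range $0\le r\le\lfloor n/2\rfloor$.
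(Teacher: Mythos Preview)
Your overall template is right and matches the paper's, but there is a genuine gap at the polynomial-identity step. When you expand $\mu(T_9)$ and $\mu(T_{10})$ at $u$, the $\mu(T_0-u)$ terms do \emph{not} cancel directly: the raw difference one obtains is
\[
\mu(T_9)-\mu(T_{10})=-(r-1)\,\mu(T_0-u)\,\mu(P_1)^{r-2}+(r-1)\,\mu(P_1)^{r-1}\!\!\sum_{v\in V(T_0),\,uv\in E(T_0)}\!\!\mu(T_0-u-v),
\]
which has two terms of opposite sign. To collapse this to a nonnegative sum one needs the extra structural fact that $u$ has a pendent neighbour $v'$ inside $T_0$, so that $\mu(T_0-u)=x\,\mu(T_0-u-v')$; then the first term is exactly the $v=v'$ summand of the second (with the opposite sign), and what survives is
\[
\mu(T_9)-\mu(T_{10})=(r-1)\!\!\sum_{\substack{v\in V(T_0),\,v\neq v'\\ uv\in E(T_0)}}\!\!\mu(P_1)^{r-1}\,\mu(T_0-u-v).
\]
The existence of such a $v'$ is precisely what Lemma~\ref{art24} supplies, and it is here---not merely as a nondegeneracy check---that the hypothesis ``$T_9$ has exactly $n-p$ pendent vertices and $\nu(T_9)=p$'' is used. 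Your final paragraph treats that hypothesis only as ensuring various graphs are nonempty; without invoking Lemma~\ref{art24} you cannot justify the sign of $\mu(T_9)-\mu(T_{10})$, and the rest of the argument would not go through.

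Once this cancellation is made, the Lov\'asz transfer and the strictness check at $r=2$ proceed exactly as you describe (and exactly as in the paper), with the forest $F=(r-1)P_1\cup(T_0-u-v)$ on $n-4$ vertices playing the role you anticipate.
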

\begin{proof}
By Lemma \ref{art22},
\begin{equation*}
\begin{split}
\mu(T_{9})&=x\mu(T_{0}-u)\mu(K_{1,r})-\mu(K_{1,r})\sum_{v\in V(T_{0})\atop uv\in  E(T_{0}) }\mu(T_{0}-u-v)-\mu(T_{0}-u)\mu(P_{1})^{r}\\
&=x^{2}\mu(T_{0}-u)\mu(P_{1})^{r}-rx\mu(T_{0}-u)\mu(P_{1})^{r-1}-x\mu(P_{1})^{r}\sum_{v\in V(T_{0})\atop uv\in  E(T_{0}) }\mu(T_{0}-u-v)\\
&\quad+r\mu(P_{1})^{r-1}\sum_{v\in V(T_{0})\atop uv\in  E(T_{0}) }\mu(T_{0}-u-v)
 -\mu(T_{0}-u)\mu(P_{1})^{r}\qquad \qquad \qquad \qquad \qquad \qquad \ \ \ (20)
\end{split}
\end{equation*}
and
\begin{equation*}
\begin{split}
\mu(T_{10})&=x\mu(T_{0}-u)\mu(P_{2})\mu(P_{1})^{r-1}-\mu(P_{2})\mu(P_{1})^{r-1}\sum_{v\in V(T_{0})\atop uv\in  E(T_{0}) }\mu(T_{0}-u-v)
-\mu(T_{0}-u)\mu(P_{1})^{r}\\
&\quad -(r-1)\mu(T_{0}-u)\mu(P_{2})\mu(P_{1})^{r-2}\\
&=x^{2}\mu(T_{0}-u)\mu(P_{1})^{r}-x\mu(T_{0}-u)\mu(P_{1})^{r-1}-x\mu(P_{1})^{r}\sum_{v\in V(T_{0})\atop uv\in  E(T_{0}) }\mu(T_{0}-u-v)\\
&\quad+\mu(P_{1})^{r-1}\sum_{v\in V(T_{0})\atop uv\in  E(T_{0}) }\mu(T_{0}-u-v)-\mu(T_{0}-u)\mu(P_{1})^{r}
-(r-1)x\mu(T_{0}-u)\mu(P_{1})^{r-1}\\
&\quad+(r-1)\mu(T_{0}-u)\mu(P_{1})^{r-2},\qquad \qquad \qquad \qquad \qquad \qquad \qquad \qquad \qquad \qquad \qquad \ \ \ \ \  (21)
\end{split}
\end{equation*}
where the sum ranges over all vertices of $T_{0}$ incident with $u$.\\
\indent By (20) and (21), we have
$$\mu(T_{9})-\mu(T_{10})=-(r-1)\mu(T_{0}-u)\mu(P_{1})^{r-2}+(r-1)\mu(P_{1})^{r-1}\sum_{v\in V(T_{0})\atop uv\in  E(T_{0}) }\mu(T_{0}-u-v).$$
By Lemma \ref{art24}, there exists at least one pendent vertex $v'$ in $T_{0}$ joining vertex $u$ of $T_{0}$. Hence, $\mu(T_{0}-u)=x\mu(T_{0}-u-v')$, which implies that
$$\mu(T_{9})-\mu(T_{10})=(r-1)\sum_{v\in V(T_{0}), v\neq v'\atop uv\in  E(T_{0}) }\mu(P_{1})^{r-1}\mu(T_{0}-u-v).$$
\indent Similar to the proof of Theorem \ref{art32},
$$m(\overline{T_{9}},k)-m(\overline{T_{10}},k)=(r-1)\sum_{v\in V(T_{0}), v\neq v'\atop uv\in  E(T_{0}) }m(\overline{(r-1)P_{1}\cup (T_{0}-u-v)},k-2),\eqno(22)$$
where for every vertex $v$ ($\neq v'$) of $T_{0}$ incident with $u$. Hence $m(\overline{T_{9}},k)\geq m(\overline{T_{10}},k)$. Furthermore, if $k=2$, then $m(\overline{T_{9}},k)-m(\overline{T_{10}},k)\geq1$. So $\overline{T_{9}}\succ\overline{T_{10}}$.
\end{proof}

\begin{definition}\label{art311}
 Suppose that $T_{11}$ is a tree with $n$ vertices and with the edge-independence
number $p$ shown in Figure \ref{fig6}, which has exactly $n-p$ pendent vertices, where
$|V(T_{0})|\geq2$, $s\geq1$ and $t\geq1$. Let $T_{12}$ be the tree with $n$ vertices shown in Figure \ref{fig6}, which is obtained from $T_{11}$.
We designate the transformation from $T_{11}$ to $T_{12}$ as of
type {\bf 6} and denote it by  $\mathcal{F}_{6}$: $T_{11}\rightarrowtail T_{12}$ or $\mathcal{F}_{6}(T_{11})=T_{12}$.
\end{definition}
 \begin{figure}[htbp]
\begin{center}
\includegraphics[scale=1.0]{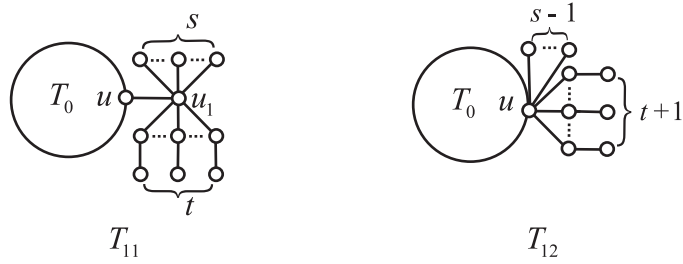}
\caption{\label{fig6}\small
{Two trees $T_{11}$ and $T_{12}$.}}
\end{center}
\end{figure}

\begin{theorem}\label{art312}
Let $T_{11}$ and $T_{12}$ be two trees with $n$ vertices defined in Definition \ref{art311}. Then $\overline{T_{11}}\succ\overline{T_{12}}$.
\end{theorem}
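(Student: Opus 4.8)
The proof will follow exactly the template of Theorems \ref{art32}, \ref{art310} and \ref{art38}, so I only sketch the three steps.

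\emph{Step 1 (a polynomial identity).} Using Lemma \ref{art22} I would compute $\mu(T_{11})$ and $\mu(T_{12})$: first expand each of $T_{11}$, $T_{12}$ along the edge joining the common subtree $T_0$ to the attached pendant structure by Lemma \ref{art22}$(ii)$, then expand the resulting copy of $T_0$ along its distinguished vertex $u$ by Lemma \ref{art22}$(iii)$, and finally expand the pendant spiders along their centres. Because $T_{11}$ and $T_{12}$ differ only in how the $s$-family and the $t$-family of pendant edges are attached, the common sum $\sum_{v\in V(T_0),\,uv\in E(T_0)}\mu(T_0-u-v)$ occurs with the same coefficient in both expansions and cancels in the difference; after collecting the remaining terms everything should collapse to a single clean identity of the shape
\begin{equation*}
\mu(T_{11})-\mu(T_{12})=c\,\mu(T_0-u)\,\mu(P_{a_1})\cdots\mu(P_{a_\ell}),
\end{equation*}
with $c$ a positive integer depending only on $s$ and $t$ (playing the role that the factor $r-1$ plays in Theorem \ref{art310}, or that $st$ plays in Theorem \ref{art38}) and the forest $F:=(T_0-u)\cup P_{a_1}\cup\cdots\cup P_{a_\ell}$ having $n-4$ vertices. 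In particular the top two coefficients agree, i.e.\ $m(T_{11},0)=m(T_{12},0)$ and $m(T_{11},1)=m(T_{12},1)$.

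\emph{Step 2 (pass to complements).} Now apply Lov\'{a}sz's formula (5) of Lemma \ref{art21} to $\overline{T_{11}}$ and to $\overline{T_{12}}$ and subtract. The $i=0$ and $i=1$ terms drop out by Step 1, and for $i\ge 2$ the difference $m(T_{11},i)-m(T_{12},i)$ equals $c$ times the $(i-2)$-th matching number of $F$. Re-indexing the alternating sum exactly as in the passage from (9) to (10) (legitimate since $|V(F)|=n-4$) gives
\begin{equation*}
m(\overline{T_{11}},r)-m(\overline{T_{12}},r)=c\,m(\overline{F},r-2).
\end{equation*}
Since $m(\overline{F},r-2)\ge 0$ for every $r$, we get $m(\overline{T_{11}},r)\ge m(\overline{T_{12}},r)$ for all $r$; and taking $r=2$ yields $m(\overline{T_{11}},2)-m(\overline{T_{12}},2)=c\,m(\overline{F},0)=c\ge 1$. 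By the definition (2) of the quasi-order, $\overline{T_{11}}\succ\overline{T_{12}}$.

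The main obstacle is Step 1: one must handle the Lemma \ref{art22} expansions carefully when the distinguished vertex $u$ — or a neighbour of $u$ inside $T_0$ — simultaneously carries the $s$-family and the $t$-family of pendant structures, and verify that all the bookkeeping terms except one cancel and that the surviving coefficient $c$ is positive. It is also worth double-checking in Step 2 that the index shift is indeed $2$ (as in Theorem \ref{art310}) and that the sign of the difference comes out $+$, favouring $T_{11}$, so that the conclusion is genuinely $\overline{T_{11}}\succ\overline{T_{12}}$ rather than its reverse. Once Step 1 is settled, Steps 2 and 3 are the now-routine Lov\'{a}sz-formula manipulation common to all the earlier transformations.
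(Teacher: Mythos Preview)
Your high-level template is right, but the concrete predictions in Step~1 are wrong in ways that matter.

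First, the cancellation pattern is reversed. In the paper's computation the sum
$\sum_{uv\in E(T_0)}\mu(T_0-u-v)$ does \emph{not} cancel; both it and a $\mu(T_0-u)$ term survive in $\mu(T_{11})-\mu(T_{12})$, with opposite signs. What makes the argument work is exactly the device used in Theorem~\ref{art310}: by Lemma~\ref{art24} (which your sketch omits) the vertex $u$ has a pendent neighbour $v'$ in $T_0$, so $\mu(T_0-u)=x\,\mu(T_0-u-v')$. This lets the $\mu(T_0-u)$ contribution cancel against the $v=v'$ summand, leaving
\[
\mu(T_{11})-\mu(T_{12})=\bigl[(s+t-1)\mu(P_1)^{s-1}\mu(P_2)^{t}+t\,\mu(P_1)^{s-1}\mu(P_2)^{t-1}\bigr]\sum_{\substack{uv\in E(T_0)\\ v\neq v'}}\mu(T_0-u-v),
\]
which is formula (23). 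So the surviving forest involves $T_0-u-v$ (for each $v\neq v'$), not $T_0-u$ as you wrote.

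Second, the difference is not of the form $c\cdot\mu(F)$ for a single forest $F$; the bracket above splits into two pieces with different total vertex counts. Consequently, after the Lov\'asz-formula manipulation one gets
\[
m(\overline{T_{11}},r)-m(\overline{T_{12}},r)=(s+t-1)\sum_{v\neq v'} m(\overline{F_v},r-2)+t\sum_{v\neq v'} m(\overline{F'_v},r-3),
\]
with $F_v=(s-1)P_1\cup tP_2\cup(T_0-u-v)$ and $F'_v=(s-1)P_1\cup(t-1)P_2\cup(T_0-u-v)$; i.e.\ two index shifts ($2$ and $3$), not one. Both summands are nonnegative, and the $r=2$ case still gives strict inequality, so the conclusion $\overline{T_{11}}\succ\overline{T_{12}}$ follows. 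The outline (matching-polynomial difference $\to$ Lov\'asz shift $\to$ nonnegativity) is correct, but to turn your sketch into a proof you must invoke Lemma~\ref{art24}, keep the $\sum_{v\neq v'}$, and allow for two forest terms rather than one.
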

\begin{proof}
Suppose $s>2$. By Lemma \ref{art22},
\begin{equation*}
\begin{split}
\mu(T_{11})&=[x^{2}\mu(P_{1})\mu(P_{2})-sx\mu(P_{2})
-tx\mu(P_{1})^{2}-\mu(P_{1})\mu(P_{2})]\mu(P_{1})^{s-1}\mu(P_{2})^{t-1}\mu(T_{0}-u)\\
&\quad -[x\mu(P_{1})\mu(P_{2})-s\mu(P_{2})
-t\mu(P_{1})^{2}]\mu(P_{1})^{s-1}\mu(P_{2})^{t-1}\sum_{v\in V(T_{0})\atop uv\in  E(T_{0}) }\mu(T_{0}-u-v)
\end{split}
\end{equation*}
and
\begin{equation*}
\begin{split}
\mu(T_{12})&=[x^{2}\mu(P_{1})^{2}\mu(P_{2})-x\mu(P_{1})\mu(P_{2})
-sx\mu(P_{1})\mu(P_{2})+s\mu(P_{2})
-\mu(P_{2})+x\mu(P_{1})\mu(P_{2})\\
&\quad-(t+1)\mu(P_{1})^{2}\mu(P_{2})]\mu(P_{1})^{s-2}\mu(P_{2})^{t-1}\mu(T_{0}-u)\\
&\quad -[x\mu(P_{1})\mu(P_{2})-\mu(P_{2})]\mu(P_{1})^{s-1}\mu(P_{2})^{t-1}\sum_{v\in V(T_{0})\atop uv\in  E(T_{0}) }\mu(T_{0}-u-v),
\end{split}
\end{equation*}
where  the sum ranges over every vertex $v$ of $T_{0}$ adjacent to $u$.\\
\indent Combining the above two   equations, we obtain that
\begin{equation*}
\begin{split}
\mu(T_{11})-\mu(T_{12})&=-[(s+t-1)x^{2}-(s-1)]\mu(P_{1})^{s-2}\mu(P_{2})^{t-1}\mu(T_{0}-u)
\\
&\quad+[(s+t-1)x^{2}-(s-1)]\mu(P_{1})^{s-1}\mu(P_{2})^{t-1}\sum_{v\in V(T_{0})\atop uv\in  E(T_{0}) }\mu(T_{0}-u-v).
\end{split}
\end{equation*}
\indent By Lemma \ref{art24}, there exists at least one pendent vertex $v'$ of $T_{0}$ adjacent to $u$. Hence $\mu(T_{0}-u)=\mu(T_{0}-u-v')$. Thus, simplifying  the above equation, we have
$$\mu(T_{11})-\mu(T_{12})=[(s+t-1)\mu(P_{1})^{s-1}\mu(P_{2})^{t}+t\mu(P_{1})^{s-1}\mu(P_{2})^{t-1}]\sum_{v\in V(T_{0}),v\neq v'\atop uv\in  E(T_{0}) }\mu(T_{0}-u-v).\eqno(23)$$
\indent  As in the proof of Theorem \ref{art31}, we can show that if $s\geq 1$, then
\begin{equation*}
\begin{split}
m(\overline{T_{11}},r)-m(\overline{T_{12}},r)&=(s+t-1)\sum_{v\in V(T_{0}),v\neq v'\atop uv\in  E(T_{0}) }m(\overline{(s-1)P_{1}\cup tP_{2}\cup(T_{0}-u-v)},r-2)\\
&\quad +t\sum_{v\in V(T_{0}),v\neq v'\atop uv\in  E(T_{0}) }m(\overline{(s-1)P_{1}\cup (t-1)P_{2}\cup(T_{0}-u-v)},r-3),
\end{split}
\end{equation*}
which implies that $m(\overline{T_{11}},r)\geq m(\overline{T_{12}},r)$. By the above equation, if $r=2$, then $m(\overline{T_{11}},r)-m(\overline{T_{12}},r)\geq 1$. By (2) and the properties as above, we have $\overline{T_{11}}\succ\overline{T_{12}}$.
\end{proof}

\section{Proofs of Theorems \ref{art01}, \ref{art02} and  \ref{art04}}
\noindent {\bf Proof of Theorem \ref{art01}.}\ \
We  prove that if $T\ncong P_{n}$ then $ME(\overline{T})<ME(\overline{P_{n}})$.
By repeated applications of transformation ${\bf 1}$ in Definition \ref{art31}, we can transform $T$ into $P_{n}$, that is, there exist trees $T^{(i)}$ for $0\leq i\leq l$ such that
$$T=T^{(0)}\hookrightarrow T^{(1)}\hookrightarrow T^{(2)}\hookrightarrow ... \hookrightarrow T^{(l-1)}\hookrightarrow T^{(l)}=P_{n},\eqno(24)$$
where $T^{(l-1)}\neq P_{n}$. By Theorem \ref{art32}, we have
$$\overline{P_{n}}=\overline{T^{(l)}}\succ\overline{T^{(l-1)}}\succ...\succ\overline{T^{(2)}}
\succ\overline{T^{(1)}}\succ\overline{T}.$$
By (4), we obtain immediately the result as follows:
$$ME(\overline{P_{n}})=ME(\overline{T^{(l)}})>ME(\overline{T^{(l-1)}})>...>ME(\overline{T^{(2)}})
>ME(\overline{T^{(1)}})>ME(\overline{T}).$$
\indent By the transformation {\bf 1} in Definition \ref{art31}, Theorem \ref{art32} and (4), it is clear that $$ME(\overline{P_{n}})>ME(\overline{T_{n,2}}).$$
\indent Now we show that $ME(\overline{T_{n,2}})> ME(\overline{T})$. Suppose $T\neq T_{n,2}$. In (24), we know that if $T^{(1-1)}=T_{n,2}$, then $\overline{T_{n,2}}\succ\overline{T}$, which implies $ME(\overline{T^{(n,2)}})>ME(\overline{T})$. If $T^{(1-1)}\neq T_{n,2}$, then $T^{(1-1)}$ must have the from of $T_{3}$ in Fig. \ref{fig2}. By repeated applications of the transformations {\bf 2} and {\bf 3} in Definitions \ref{art33} and \ref{art35}, $T_{3}$ can be transformed into $T_{n,2}$. By Theorems \ref{art34} and \ref{art36}, we have $\overline{T_{n,2}}\succ\overline{T_{3}}\succ\overline{T}$. By (4), $ME(\overline{T_{n,2}})>ME(\overline{T})$.  This completes the proof. \ \ \ \ \ \ \ \ \ \ \ \ \ \ \ \ \ \ \ \ \ \ \ \ \ \ \ \ \ \ \  $\square$

\indent In order to prove Theorem \ref{art02}, we need the following two lemmas by Yan et al. in \cite{yan}.
\begin{lemma}\label{art41}{\rm(\cite{yan}) }
For an arbitrary tree $T$ with $n$ vertices and with edge-independence
number $\nu(T) = p$, if the number of pendent vertices of $T$ is less than $n-p$, then, by
repeated applications of the transformation {\bf 4} in Definition \ref{art37}, $T$ can be transformed
into a tree $T'$ with $n$ vertices and with $\nu(T') = p$, the number of pendent vertices of which
is exactly $n-p$.
\end{lemma}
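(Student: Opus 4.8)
The plan is to realize the transformation as a finite chain of single applications of $\mathcal{F}_{4}$, each of which keeps the edge-independence number equal to $p$ and raises the number of pendent vertices by exactly one. First I would record the effect of one application. Let $e=uv$ be an \emph{internal} edge of a tree $S$ (both ends of degree $\ge2$), write $S-e=S_{u}\sqcup S_{v}$ with $u\in S_{u}$ and $v\in S_{v}$, and let $S'=\mathcal{F}_{4}(S)$ be the tree obtained by identifying $u$ with $v$ and attaching a new pendent vertex at the identified vertex. A short matching count (or Lemma~\ref{art22}) gives $\nu(S')=1+\nu(S_{u}-u)+\nu(S_{v}-v)$ and $\nu(S)=\max\{\nu(S_{u})+\nu(S_{v}),\,1+\nu(S_{u}-u)+\nu(S_{v}-v)\}$, while $S'$ has exactly one more pendent vertex than $S$ (the identified vertex has degree $\ge3$, so no old non-pendent vertex turns pendent). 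Hence $\nu(S')\le\nu(S)$ always, with equality precisely when at least one of $u,v$ is \emph{exposable} on its own side, a vertex $x$ being called exposable in a tree $R$ if $\nu(R-x)=\nu(R)$ (equivalently, some maximum matching of $R$ misses $x$). Call an internal edge \emph{good} if $\mathcal{F}_{4}$ along it preserves $\nu$.

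The argument is then an induction on $d(T):=n-\nu(T)-\ell(T)$, where $\ell(T)$ is the number of pendent vertices of $T$; by Lemma~\ref{art24} this is a nonnegative integer, it equals $0$ exactly when $T$ has $n-\nu(T)$ pendent vertices, and it drops by one after an $\mathcal{F}_{4}$-move along a good internal edge. For the inductive step, assuming $d(T)>0$, I must exhibit a good internal edge of $T$, which follows from two claims. \textbf{(a)} $T$ has a non-pendent vertex $w$ all of whose neighbours are non-pendent: otherwise each non-pendent vertex picks a pendent neighbour, these edges are pairwise non-adjacent (a pendent vertex has a unique neighbour), so $\nu(T)\ge n-\ell(T)$ and $d(T)\le0$, contrary to assumption. \textbf{(b)} Some edge incident with $w$ is good: root $T$ at $w$, let its children be $y_{1},\dots,y_{k}$ with $k\ge2$, let $B_{i}$ be the subtree rooted at $y_{i}$ and $A_{i}=T\setminus B_{i}$, so $T-wy_{i}=A_{i}\sqcup B_{i}$ with $w\in A_{i}$, $y_{i}\in B_{i}$. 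Here one invokes the recursive description of exposability in a tree: a vertex $v$ is \emph{not} exposable in a tree $R$ iff $v$ has a neighbour $z$ in $R$ which is exposable in the component of $R-v$ containing $z$ (an immediate consequence of splitting $\nu(R)$ at $v$). If every $wy_{i}$ were bad, then each $y_{i}$ is non-exposable in $B_{i}$ and $w$ is non-exposable in each $A_{i}$; applying the recursion to the latter produces, for each $i$, a child $y_{j}$ with $j\ne i$ that is exposable in its component of $A_{i}-w$ — but that component is exactly $B_{j}$, contradicting that $y_{j}$ is non-exposable in $B_{j}$. Hence some $wy_{i}$ is good; since $|A_{i}|\ge2$ and $|B_{i}|\ge2$ (as $y_{i}$ is non-pendent), $\mathcal{F}_{4}$ genuinely applies there, and the induction closes.

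I expect claim~(b) to be the main obstacle: the rest is bookkeeping, but (b) relies on having exactly the right lemma describing which vertices of a tree a maximum matching can miss, namely the neighbour-recursion above (itself proved by the standard pendant-edge/root decomposition of $\nu$); once that is in place the ``all edges at $w$ bad'' hypothesis collapses as shown. The remaining points are routine: claim~(a) is one line, the effect of a single $\mathcal{F}_{4}$ on $\nu$ and on the pendent count is a short computation, and termination is automatic because $d(T)$ is a nonnegative integer decreasing by one per step, so the chain ends exactly when $T$ has been turned into a tree $T'$ with $\nu(T')=p$ and exactly $n-p$ pendent vertices.
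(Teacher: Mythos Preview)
The paper does not supply a proof of this lemma at all: it is quoted verbatim from \cite{yan} and used as a black box in the proof of Theorem~\ref{art02}. So there is no ``paper's approach'' to compare against; what you have written is a genuine, self-contained argument where the paper has none.

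Your argument is correct. The single-step analysis of $\mathcal{F}_{4}$ is right: with $S-e=S_{u}\sqcup S_{v}$ one has
\[
\nu(S')=1+\nu(S_{u}-u)+\nu(S_{v}-v),\qquad
\nu(S)=\max\{\nu(S_{u})+\nu(S_{v}),\,1+\nu(S_{u}-u)+\nu(S_{v}-v)\},
\]
so $\nu(S')\le\nu(S)$ with equality iff at least one endpoint is exposable on its side; and because $e$ is internal the pendent count goes up by exactly one. The potential $d(T)=n-\nu(T)-\ell(T)$ is nonnegative by Lemma~\ref{art24} and drops by one along a good internal edge, so the induction is well-founded. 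Claim~(a) is the standard one-line counting argument. For claim~(b), your neighbour-recursion characterisation of exposability in a tree is exactly the statement that $v$ is inessential in $R$ iff every neighbour $z$ of $v$ is essential in its component of $R-v$; applying it at $w$ inside each $A_{i}$ and noting that the components of $A_{i}-w$ are precisely the $B_{j}$ with $j\ne i$ gives the contradiction cleanly. Finally, $|A_{i}|\ge2$ and $|B_{i}|\ge2$ hold because $w$ has at least two neighbours and each $y_{i}$ is non-pendent, so the hypotheses of Definition~\ref{art37} are met.

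One small presentational point: the formula $\nu(S')=1+\nu(S_{u}-u)+\nu(S_{v}-v)$ deserves one explicit line of justification (any maximum matching of $S'$ may be taken to use the new pendent edge, after which the rest is a maximum matching of $(S_{u}-u)\sqcup(S_{v}-v)$); you allude to this but a reader might appreciate seeing it written out.
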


\begin{lemma}\label{art42}{\rm(\cite{yan}) }
For an arbitrary tree $T$ with $n$ vertices and with  $\nu(T) > p$,
repeated applications of the transformation {\bf 4} in Definition \ref{art37} transform $T$ into a tree $T''$ with $n$ vertices and with $\nu(T'') = p$, the number of pendent vertices of which
is exactly $n-p$.
\end{lemma}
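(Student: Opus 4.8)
The plan is to bring the matching number down to $p$ one unit at a time, where each unit drop is produced by first pushing the tree to the maximum possible number of pendant vertices (using Lemma~\ref{art41}) and then applying one carefully chosen transformation~{\bf 4}; the whole argument is then an induction on $q-p$, where $q=\nu(T)$. Note $q>p\geq 1$, so $q\geq 2$ throughout.

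First I would record the effect of transformation~{\bf 4} on $\nu$. Call a vertex $u$ of a tree \emph{essential} if every maximum matching covers it, equivalently $\nu(T'_1-u)=\nu(T'_1)-1$. In the notation of Definition~\ref{art37}, if both chosen vertices $u\in T'_1$ and $v\in T'_2$ are essential, then a short case check on whether a matching uses the edge $uv$ gives $\nu(T_7)=\nu(T'_1)+\nu(T'_2)$, while $\nu(T_8)=1+\nu(T'_1-u)+\nu(T'_2-v)=\nu(T'_1)+\nu(T'_2)-1$; hence $\nu(T_8)=\nu(T_7)-1$. (If at least one of $u,v$ fails to be essential one checks the same way that $\nu$ is preserved, so an ``essential--essential'' edge is exactly what is needed to force a drop.)

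Next, starting from a tree with $\nu=q\geq 2$: if it has fewer than $n-q$ pendant vertices, apply Lemma~\ref{art41} to replace it by a tree, still with matching number $q$, having exactly $n-q$ pendant vertices. So we may assume $T$ has exactly $n-q$ pendant vertices, and then by Lemma~\ref{art24} every vertex of degree $\geq 2$ is adjacent to a pendant vertex. Take a longest path $v_0v_1\cdots v_\ell$ of $T$; since $\nu(T)\geq 2$ the diameter of $T$ is at least $3$, so $\ell\geq 3$. Every neighbour of $v_1$ other than $v_2$ is pendant (otherwise the path could be lengthened), so $v_2$ is the unique non-pendant neighbour of $v_1$ and has degree $\geq 2$. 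Deleting the edge $v_1v_2$ splits $T$ into a component $T'_1\ni v_1$, which is a star centred at $v_1$ on at least two vertices and in which $v_1$ is essential, and a component $T'_2\ni v_2$; by Lemma~\ref{art24}, $v_2$ has a pendant neighbour in $T$, which cannot be $v_1$, hence lies in $T'_2$ and is still pendant there, so every maximum matching of $T'_2$ covers $v_2$, i.e.\ $v_2$ is essential in $T'_2$. Applying transformation~{\bf 4} along $v_1v_2$ (with $u=v_1$, $v=v_2$) therefore yields a tree $T_*$ on $n$ vertices with $\nu(T_*)=q-1$.

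Finally I would close the induction on $q-p$: if $q-1=p$, one more application of Lemma~\ref{art41} turns $T_*$ into a tree $T''$ with $\nu(T'')=p$ and exactly $n-p$ pendant vertices; if $q-1>p$, apply the induction hypothesis to $T_*$. Every step used is a transformation~{\bf 4}, which is what the lemma asserts. The hard part is the middle step: a single transformation~{\bf 4} applied to an arbitrary tree need not lower $\nu$ (for example a spider with three legs of length two has no essential--essential edge), and the device that gets around this is to first saturate the number of pendant vertices so that Lemma~\ref{art24} becomes available; the essentiality of the non-pendant neighbour $v_2$ then reduces exactly to the fact that $v_2$ keeps a pendant neighbour once the edge $v_1v_2$ is removed.
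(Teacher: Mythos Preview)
The paper does not supply its own proof of this lemma; it is quoted verbatim from \cite{yan}. So there is no argument in the paper to compare against, and the question reduces to whether your proof stands on its own.

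It does. The computation that transformation~{\bf 4} lowers $\nu$ by exactly one precisely when both chosen vertices $u,v$ are essential in their respective pieces is correct (for $T_7$ one always has $\nu(T_7)=\nu(T'_1)+\nu(T'_2)$, while for $T_8$ essentiality of both forces $\nu(T_8)=1+(\nu(T'_1)-1)+(\nu(T'_2)-1)$). The device of first invoking Lemma~\ref{art41} to saturate the number of pendant vertices, so that Lemma~\ref{art24} guarantees every non-pendant vertex has a pendant neighbour, is exactly what is needed to manufacture an essential--essential edge; your longest-path choice of $v_1v_2$ then works because $v_1$ is the centre of a star in $T'_1$ and $v_2$ retains a pendant neighbour in $T'_2$ (since $v_1$ itself has degree $\geq 2$ and hence is not that pendant neighbour). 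The side conditions $|V(T'_1)|,|V(T'_2)|\geq 2$ of Definition~\ref{art37} are met because $\ell\geq 3$. The induction on $q-p$ closes the argument.

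One small remark: your proof uses Lemma~\ref{art41} as a black box, and that lemma is likewise only cited from \cite{yan} in this paper; so your argument inherits that dependence, but within the paper's logical framework this is entirely legitimate.
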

\noindent {\bf Proof of Theorem \ref{art02}.}\ \
Assume $T\ncong T_{n}^{p}$. Now we prove $ME(\overline{T})> ME(\overline{T_{n}^{p}})$ and distinguish the following three cases.\\
{\bf Case 1} We assume that the edge-independence number of $T$ is $p$ and it has exactly $n-p$ pendent vertices. By Lemma \ref{art24}, the structure of $T$ is clear. It is not difficult that, with repeated applications of the transformations {\bf 5} and {\bf 6} in Definitions \ref{art39} and \ref{art311},  $T$ can transformed into $T_{n}^{p}$.
Furthermore, by Theorems \ref{art310} and \ref{art312}, we have $\overline{T}\succ\overline{T_{n}^{p}}$. This indicates, by (4), that $ME(\overline{T})> ME(\overline{T_{n}^{p}})$.

\noindent {\bf Case 2} Assume $\nu(T)=p$ and the number of pendent vertices of $T$ is less than $n-p$. By Lemma \ref{art41}, it can be known that $T$ can be transformed into one tree $T'$ with $n$ vertices, $\nu(T')=p$ and the number of pendent vertices of which is exactly $n-p$. If $T'\neq T_{n}^{p}$, then, by Theorem \ref{art38}, we have $\overline{T}\succ\overline{T'}$. By Case 1,we note that $\overline{T}\succ\overline{T_{n}^{p}}$. If $T'= T_{n}^{p}$, then, by Remark 2, we have $\overline{T}\succ\overline{T'}$. Similarly, by Case 1, we have $\overline{T}\succ\overline{T'}\succ\overline{T_{n}^{p}}$, which implies  $\overline{T}\succ\overline{T_{n}^{p}}$. These mean, by (4), that $ME(\overline{T})> ME(\overline{T_{n}^{p}})$.

\noindent {\bf Case 3} Suppose $\nu(T)>p$. By Lemma \ref{art42}, we know that $T$ can be transformed into one tree $T''$ with $n$ vertices, $\nu(T'')=p$ and the number of pendent vertices of which is exactly $n-p$. Similar to Case 2, we can show that $ME(\overline{T})> ME(\overline{T_{n}^{p}})$.

\indent Combining Case 1-3, The Theorem \ref{art02} holds.\ \ \ \ \ \ \ \ \ \ \ \ \ \ \ \ \ \ \ \ \ \  \ \ \ \ \ \ \ \ \ \ \ \ \ \ \ \ \ \ \ \ \ \ \ \ \ \ \ \ \ \ \ \ \ \ \ \ \ \ \ \ $\square$

\noindent {\bf Proof of Theorem \ref{art04}.}\ \
By Theorems \ref{art01} and \ref{art02},
it can be seen that $ME(\overline{T_{n}^{\frac{n}{2}}})<ME(\overline{T})< ME(\overline{P_{n}})$ and $ME(\overline{T_{n,2}^{1}})< ME(\overline{P_{n}})$. The following we prove that $ME(\overline{T})\leq ME(\overline{T_{n,2}^{1}})$ when $T\in\mathscr{T}_{n,\frac{n}{2}}$ and $T\ncong P_{n}$.

Assume $T\ncong P_{n}$. Similar to the proof of Theorem \ref{art01}, there exist trees $T^{(i)}$ for $0\leq i\leq l$ such that
$$\overline{P_{n}}=\overline{T^{(l)}}\succ\overline{T^{(l-1)}}
\succ\overline{T^{(l-2)}}\succ\overline{T^{(l-3)}}\succ...\succ\overline{T^{(2)}}
\succ\overline{T^{(1)}}\succ\overline{T}.$$
Obviously, $T^{(l-2)}=T_{n,2}^{1}$ or $T^{(l-2)}$ has the from of $T_{3}$ in Fig. \ref{fig2}. By (4), we know that if $T^{(l-2)}=T_{n,2}^{1}$, then $ME(\overline{T})<ME(\overline{T_{n,2}^{1}})$. If $T^{(l-2)}\neq T_{n,2}^{1}$, by repeated applications of the transformations {\bf 2} and {\bf 3} in Definitions \ref{art33} and \ref{art35}, $T_{3}$ can be transformed into $T_{n,2}^{1}$. By Theorems \ref{art34} and \ref{art36}, we have
$\overline{T_{n,2}^{1}}\succ\overline{T}$. By (4), $ME(\overline{T_{n,2}^{1}})>ME(\overline{T})$. \ \ \ \ \ \ \ \ \ \ \ \ \ \ \ \ \ \ \ \ \ \ \ \ \ \ \ \ \ \ \ \ \ \ \ \ \ \ \ \ \ \ $\square$

\begin{remark}
Denote by $\mathscr{T}_{n,p}$ the proper set of $\mathcal{T}_{n,p}$ containing all trees with edge-independence number $p$. Examining Theorem \ref{art04}, we see that if $p=\frac{n}{2}$ in $\mathscr{T}_{n,p}$, then $\overline{P_{n}}$ and $\overline{T_{n,2}^{1}}$ have the maximal and second-maximal matching energy, respectively. A natural question is how to characterize the trees with edge-independence number $p$ whose complements have the maximum matching energy in complements of all trees with edge-independence number $p$.
\end{remark}

\end{document}